\def\mmR{\textcolor{violet}} 
\newtheorem{theorem}{Theorem}[section]
\newtheorem{lemma}[theorem]{Lemma}
\newtheorem{corollary}[theorem]{Corollary}
\newtheorem{definition}[theorem]{Definition}
\newtheorem{example}[theorem]{Example}
\newtheorem{remark}{Remark}
\newcommand{\numberset}{\mathbb} 
\newcommand{\R}{\numberset{R}}
\newcommand{\N}{\numberset{N}}
\newcommand{\Z}{\numberset{Z}}
\newcommand{\beautyset}{\mathcal} 
\newcommand{\M}{\beautyset{M}}
\newcommand{\T}{\beautyset{T}}
\newcommand{\F}{\beautyset{F}}
\newcommand{\J}{\mathbf{j}}
\newcommand{\V}{\beautyset{V}}
\newcommand{\B}{\beautyset{B}}
\newcommand{\del}{\mathfrak{d}}
\def\Ss{{\bf t}}
\def\u{{\bf u}}
\def \ds{\displaystyle}
\def \vsm{\vskip 0.2 truecm}
\def \vsmm{\vskip 0.1 truecm}
\def\bel{\begin{equation}\label}
\def\eeq{\end{equation}}
\def \w{\omega}
\def \d{{\bf d}}
\def\sgn{\text{{\rm sgn}}}
\begin{document}

\title[HJ involving Lie brackets and stabilizability with a cost]{HJ inequalities involving  Lie brackets   and \\  feedback  stabilizability  with  cost regulation}
\author{Giovanni Fusco}\address{G. Fusco, Dipartimento di Matematica,
Universit\`a di Padova\\ Via Trieste, 63, Padova  35121, Italy\\
email:\,
fusco@math.unipd.it }
\author{Monica Motta}\address{M. Motta, Dipartimento di Matematica,
Universit\`a di Padova\\ Via Trieste, 63, Padova  35121, Italy\\
email:\,
motta@math.unipd.it}
\author{Franco Rampazzo}\address{F. Rampazzo, Dipartimento di Matematica,
Universit\`a di Padova\\ Via Trieste, 63, Padova  35121, Italy\\
email:\,
rampazzo@math.unipd.it}

\maketitle
\begin{abstract}
With reference to  an optimal control problem where the state has to approach asymptotically a closed target  while paying a non-negative  integral cost, we propose a   generalization of the classical dissipative  relation that defines  a Control Lyapunov Function to a weaker differential  inequality. The latter  involves both the cost and the   iterated  Lie brackets of the vector fields in the dynamics up to a certain degree $k\geq 1$, and we call any of its  (suitably defined) solutions a {\it degree-$k$ Minimum Restraint Function}. We prove that the existence of    a  degree-$k$ Minimum Restraint Function allows us to build a   Lie-bracket-based feedback which sample stabilizes the system to the target while {\it regulating} (i.e., uniformly bounding) the cost. 
\end{abstract}

 	

\subsection*{Keywords} \texttt{Lyapunov functions, asymptotic stabilizability,  discontinuous feedback law, optimal control, Lie brackets. }
\subsection*{AMS Subject classification codes}\texttt{93D30, 93D20, 49N35, 34A26, 34D20}

\section{Introduction}
The stabilizability to a  target $\T\subset\R^n$ of  a control system $\dot y = f(y,\alpha)$ on $\R^n$  consists in the  existence of   a   feedback  law
	$\alpha:\R^n\setminus\T\to A$, where $A$ is the set of control values, whose (suitably defined) implementation  drives the state trajectory towards the target $\T$, in  some  uniform way. 
	Though stabilizability is a   dynamical issue, it  may obviously be considered together with a concomitant optimization problem.
	Specifically,  starting from any state $x\in\R^n\backslash\T$, one might wish to minimize a cost functional of the form 
	\bel{cost_intro}
	\int_0^{S_y} l(y(s),\alpha(s)) \, ds, \qquad{(l\ge0)}, 
	\eeq 
	over all  control-trajectory pairs $(\alpha, y):[0,S_y[\to A\times \R^n$ of  the control system \bel{sys_intro}\dot y = f(y,\alpha),\,\,\,\, y(0)=x,\eeq  where $0<S_y\le\infty$ denotes  the infimum among the times   $S$ that verify\linebreak $\ds\lim_{s\to S^-} {\rm dist}(y(s),\T)=0 
	$.  A natural question then arises: 
	\vsmm
\noindent ({\bf Q})  	{\it   Can we establish sufficient conditions  --in terms of existence of a special kind of Control Lyapunov Function--  to obtain a somehow extended notion  of stabilizability,  so to  include a simultaneous bound on the cost paid by the chosen control-trajectory pairs?}
	\vsmm
	As for  global asymptotic  controllability,  which is stabilizability's corresponding open loop notion, 
	early answers to   ($\mathbf{Q}$)  have been provided in  \cite{MR,LMR} by means of   the notion of Minimum Restraint Function (MRF). The latter is a particular  Control Lyapunov Function --i.e. a solution of a suitable Hamilton-Jacobi  dissipative  inequality-- which, besides implying  global asymptotic  controllability,   {\em regulates} the cost, namely   it provides a criterium for selecting control-trajectory pairs $s\mapsto(\alpha(s),y(s))$ which satisfy
	a   uniform  bound on the corresponding cost. 
Question ({\bf Q}) has received  some answers for  both bounded and unbounded controls   in \cite{LM} and \cite{LM2,LM19}, respectively,  where one shows how  a stabilizing feedback law,   which also ensures an upper bound on the cost, can be built  starting from a MRF, through a `sample and hold' approach. (Among the different notions of stabilizability , see e.g. \cite{SS80,Art83,Brockett,So,CLSS,TT,AB}, we  consider here the so-called {\it sample stabilizability}, see e.g. \cite{CLRS,R2,C10}.)

   In relation to 
the driftless 
control-affine 
     system  
	\bel{c_s_intro}
	\dot y(s) = \sum_{i=1}^m  f_i(y(s))\, \alpha^i(s), \qquad 
	\eeq
where the controls $\alpha$ take values in $A:=\{\pm e_1,\dots,\pm e_m\}$ ($e_i$ denoting the $i$th element of the   canonical basis of $\R^m$),  in this paper we aim to improve these results by  constructing  a   
Lie-bracket-determined stabilizing feedback which, at the same time, induces  a bound for cost \eqref{cost_intro}.    
	  This means, in particular, that  the utilized `sample and hold' technique  involves  not only  the  vector fields $f_1,\ldots,f_m$  but also  their iterated Lie  brackets.
%
%
	More precisely, we 	will consider  the \textit{degree-$k$ Hamiltonian} 
	\bel{hamk}
	H[p_0]^{(k)}(x,p,u) := \min_{B}  \Big\{\langle p ,  B (x) \rangle+ p_0(u) \ds \,\max_{a\in A_ B}l(x,a)\Big\},
	\eeq
	where $p_0:\R_{\ge 0}\to [0,1]$ is a continuous increasing function, here  called  {\it cost multiplier}.   The 
 minimum is  taken among  (signed)  iterated Lie brackets $B$  of the vector fields $f_1,\dots, f_m$   of length $\leq k$, while, for any bracket $B$, the inner maximization   is performed over the subset  $A_ B\subset A$ of control values
	  utilized to approximate the  $B$-flow. The novelty with respect to the standard Hamiltonian consists in the fact 
	that, {on the one hand}, for $k\geq 2$, $H[p_0]^{(k)}$ is obtained by minimization over vectors (the Lie brackets) which may not belong     to  the dynamics of the system, and,  on the other hand,   $H[p_0]^{(k)}$ depends also on the functions $l$ and $p_0$.
	Under some mild  hypotheses specified in Section  \ref{section_mrf}, the degree-$k$ Hamiltonian  $H[p_0]^{(k)}$  is well defined and continuous.  Finally,  in order to choose the  right  iterated  Lie bracket (in the construction of the feedback) we exploit the following  differential inequality:
	\bel{dissipative0intro} \begin{array}{l} 
	\ds	H[p_0]^{(k)}(x,p, U(x))  \leq -\gamma(U(x)) 
		\qquad \forall x \in \R^n\setminus \T,  \ 
	 \  \forall p\in\partial_{\mathbf{P}}U(x). 
	\end{array}
	\eeq
Here,  the {\it dissipative rate}	$\gamma$ is an increasing function taking values in $]0,+\infty[$ and $\partial_{\mathbf{P}} U(x)$  denotes   the proximal subdifferential of $U$ at $x$ (see \eqref{proximal}).
	We call relation \eqref{dissipative0intro}  the    {\em degree-$k$  HJ dissipative inequality} (where {\em HJ  } stands for Hamilton-Jacobi).   
	A proper,  positive definite, and continuous function   $U: \overline{\R^n \setminus \T} \to \R$  satisfying   \eqref{dissipative0intro} for some $p_0$ and $\gamma$,  is  called  a  \textit{degree-$k$  Minimum Restraint Function}  (in short,  \textit{degree-$k$ MRF}).  
	Let us observe that, as a trivial consequence  of the monotonicity
	$$
	H[p_0]^{(k)} \leq H[p_0]^{(k-1)} \leq \dots \leq H[p_0]^{(1)},
	$$ the higher the Hamiltonian's degree  the larger   the corresponding set of  MRFs.
 Furthermore, as shown in  an example in \cite{MR3}, for $k$ sufficiently large it can well happen that  a smooth 
	{\it degree}-$k$  MRF   does exist  while a standard, i.e. degree-1, $C^1$ MRF, does not  (see also  \cite[Ex. 2.1-2.4]{MR2},  for the case with no cost).  An increased regularity of a degree-$k$  MRF --possibly obtained by choosing a sufficiently large degree $k$-- is of obvious interest, both  from a numerical point of view and  in relation with feedback insensitivity   to data errors.

	Our  main result, which  will be  rigorously stated in   Theorem  \ref{Th_strong}, can be  roughly summarized   as follows:  
	\vsmm
 {\bf Main result.} 
	{\it Under some regularity and  integrability  assumptions, the existence of a  degree-$k$ Minimum Restraint Function $U:\overline{\R^n\setminus \T} \to \R$ 
		implies that    control system \eqref{c_s_intro} is degree-$k$ sample stabilizable to $\T$ with   regulated cost.} 
	\vsmm
	 	
  Let us observe  that the use of Lie brackets is a  well-established, basic tool in the investigations of   necessary   conditions for optimality as well as sufficient  conditions for  controllability (see e.g.\cite{K,Su,AMR20,AgSa,Co1}). Furthermore,  Lie algebraic  assumptions plays a crucial role in the study
 	of regularity and uniqueness  for  boundary value problems of  Hamilton-Jacobi equations  (see e.g. \cite{BCD} and references therein). However, in the mentioned literature   Lie brackets are not involved explicitly in the connected HJ equations inequalities, as instead is the case in our degree-$k$  dissipative inequality  \eqref{dissipative0intro}.  
	 	
    As for the   assumptions in the above statement, they 
	include some integrability properties of the cost multiplier $p_0$ and of  the dissipative rate $\gamma$.   Furthermore,
	 whenever $k>1$,  they also involve a certain interplay between the curvature parameters of $\partial \T$ and the  semiconcavity  coefficient
	  of the MRF $U$ (see Subsection \ref{mainre}). As a matter of fact, the need for  such a condition was somehow predictable,  since it results unavoidable in  the  high order controllability conditions for the minimum time problem    with a general target  (see e.g.  \cite{KrasQ,Kras,Ma,MaRi}), where  $U$ is the distance $\d$ from the target and $l\equiv1$ (see Rem. \ref{RTmin}).  
	For instance, when $\d$   is a solution of \eqref{dissipative0intro} for $p_0$ and $\gamma$  positive constants,   the  `regularity and integrability assumptions' of the Main Result are always satisfied as soon as either $\T$ is a singleton or $\T$ satisfies the internal sphere condition (see Rem. \ref{Rsc0}).

   Let us summarize the previous considerations by saying that,  on the one hand, our results extend previous results on  sample stabilizability by considering both higher order conditions and a regulated cost. Also,  our main result extends  some achievement of \cite{MR2,Fu}, which concerned the case without a cost. On the other hand,  it  can  be regarded as a generalization to  problems  with a nonnegative vanishing Lagrangian $l$ (widely investigated from a PDEs' point of view, see e.g. \cite{Mal,MS1,MS2}) of both classical  achievements   on H\"older continuity and  more recent regularity results    for the  minimum time problem. 

The paper is organized as follows.   Section  \ref{section_mrf}  is devoted to  state the definitions of   degree-k feedback generator and degree-k sample stabilizability with regulated cost.  In Section  \ref{sec_results} we introduce the  precise assumptions and  establish the main result, whose proof is given in Section   \ref{proof_section}. 
	In  Section \ref{Sec_gen} we briefly mention possible generalizations of both the form of the dynamics (no longer driftless control affine) and in the regularity assumptions. In particular, the latter can be weakened up to the point of considering set-valued Lie brackets of Lipschitz continuous vector fields.  

\subsection{Notation and preliminaries} \label{preliminari}
For any  $a,b\in\R$, let us set  $a\vee b:= \max\{a,b\}$, $a\wedge b:= \min\{a,b\}$. For any integer $N\ge1$, $(\R^N)^*$ denotes  the dual space of $\R^N$ (and is often identified with $\R^N$ itself), while  we set $\R_{\ge0}^N:=[0,+\infty[^N$ and $\R_{>0}^N:=]0,+\infty[^N$. For  $N=1$ we simply write $\R_{\ge0}$ and  $\R_{>0}$, respectively. We denote the closed unit ball in $\R^N$    by  $\B_N$ and, given $r>0$, $r\B_{N}$  will stand for the  ball of radius $r$  (we do not specify the dimension when it is clear from the context). 
Given two nonempty sets $X$, $Y \subseteq \R^N$, we call {\em distance} between $X$ and $Y$  the number ${\rm dist}(X, Y) := \inf\{ |x-y| \text{ : } x\in X \text{, } y \in Y\}$. We do observe that ${\rm dist}(\cdot,\cdot)$ is not a distance in general, while the map $\R^N\ni x\mapsto {\rm dist}(\{x\},X)$ coincides with the distance function from $X$. 
We set $\B(X,r) := \{ x \in \R^N \text{ : } {\rm dist}(\{x\},X) \leq r \}$
and we write $\partial X$, int$(X)$, and $\overline X$ for the boundary, the interior, and the closure of $X$, respectively. 
 For any two points $x$, $y \in \R^N$ we denote by sgm$(x,y)$ the segment joining them, i.e. sgm$(x,y) := \{ \lambda x+(1-\lambda)y \text{ : } \lambda\in[0,1] \}$. Moreover, for any two vectors $v,w \in \R^N$, $\langle v,w\rangle$  denotes their scalar product.
\vsm 
Let   $\Omega \subseteq  \R^N$ be an open, nonempty subset. 
 Given an integer $k\geq 1$, we write $C^k(\Omega)$ for the set of vector fields of class $C^k$ on $\Omega$, namely $C^k(\Omega):=C^k(\Omega; \R^N)$, while  $C^{k}_b(\Omega) \subset C^k(\Omega)$ denotes the subset
 of vector fields with bounded derivatives up to order $k$.
We use  $C^{k-1,1}(\Omega)\subset C^{k-1}(\Omega)$ to denote the subset of vector fields whose $(k-1)$-th derivative is Lipschitz continuous on $\Omega$, and we set  and $C^{k-1,1}_b (\Omega):=C^{k-1}_b(\Omega)\cap C^{k-1,1}(\Omega) $. 


\noindent  We say that a continuous function $G:\overline{\Omega} \to\R$ is  {\em positive definite} if  $G(x)>0$ \,$\forall x\in\Omega$ and $G(x)=0$ \,$\forall x\in\partial\Omega$. The function $G$ is called {\em proper}  if the pre-image $G^{-1}({\mathcal K})$ of any compact set ${\mathcal K}\subset \R$ is compact.   
  We use     $\partial_{\mathbf{P}}G(x)$  to denote  the (possibly empty) {\em proximal subdifferential of $G$ at $x$}, namely the subset of  $(\R^N)^*$  such that,  for some positive constants  $\rho$, $c$, one has
 \bel{proximal}
p\in \partial_{\mathbf{P}}G(x)\iff G(\bar x)-G(x)+\rho|\bar x-x|^2\ge \langle p\,,\, \bar x-x\rangle \qquad \forall \bar x\in  \B(\{x\}, c).
\eeq
The {\em limiting subdifferential  $\partial G(x)$ of $G$ at $x\in \Omega$,}  is defined as
\[ 
 \partial G(x) := \Big\{p\in\R^N: \ \ p=\lim_{i\to+\infty}, \   p_i: \  p_i\in \partial_{\mathbf{P}}G(x_i), \ \lim_{i\to+\infty} x_i=x\Big\}.
\]
 When the function $G$ is locally Lipschitz continuous on $\Omega$, the limiting subdifferential $\partial G(x)$ is nonempty at every point.  

\noindent We say that a function $G: \Omega \to \R$ is \textit{semiconcave} (with linear modulus) on $\Omega$ if it is continuous and for any closed subset $\M\subset\Omega$ there exists $\eta_{_\M} >0$ such that  
\[  G(x_1) + G(x_2) - 2G\left( \frac{x_1+x_2}{2}\right) \leq \eta_{_\M} |x_1-x_2|^2      \]
for all $x_1$, $x_2 \in \M$ such that the segment sgm$(x_1,x_2)$ is contained in $\M$. If this property is valid just for any compact subset $\M\subset\Omega$, $G$ is said to be \textit{locally semiconcave} (with linear modulus) on $\Omega$.
  In both cases, $G$ is locally Lipschitz continuous and,
for $\M$, $\eta_{_\M}$, $x_1$, $x_2$ as above,
  for any  $p \in \partial G(x)$ the following inequality  holds true  as well
(see \cite[Prop. 3.3.1, 3.6.2]{CS}):
\bel{semiconcave_est}
 G(x_2) - G(x_1) \leq \langle p , x_2-x_1 \rangle + \eta_{_\M} |x_2-x_1|^2 .
\eeq

\vsm 
Finally, let us collect some basic definitions on iterated Lie brackets.  If $g_1$, $g_2$ are $C^1$ vector fields on $\R^N$  the {\it Lie bracket of $g_1$ and $g_2$} is defined  as
$$
[g_1,g_2](x) := Dg_2(x)\cdot g_1(x) -  D g_1(x)\cdot g_2(x)\,\,\, \big(= - [g_2,g_1](x)\big).
$$ 
As is well-known, the map $[g_1,g_2]$ is a true
 vector field, i.e. it can be defined intrinsically. If the vector fields are sufficiently regular,  one can iterate the bracketing process: for instance, given a $4$-tuple ${\bf g}:=(g_1,g_2,g_3,g_4)$ of vector fields  one can construct  the brackets $[[g_1,g_2],g_3]$,  $[[g_1,g_2],[g_3,g_4]]$,  $[[[g_1,g_2],g_3],g_4]$, $[[g_2,g_3],g_4]$.   Accordingly, one can consider the  {\it  (iterated)  formal brackets } $B_1:=[[X_1,X_2],X_3]$, $B_2:=[[X_1,X_2],[X_3,X_4]]$, $B_3:=[[[X_1,X_2],X_3],X_4]$, $B_4:=[[X_2,X_3],X_4
 ]$ (regarded as  sequence of letters $X_1,\ldots,X_4$, commas, and left and right square  parentheses), so that, with obvious meaning of the notation,   $B_1({\bf g}) = [[g_1,g_2],g_3]$,   $B_2({\bf g}) = [[g_1,g_2],[g_3,g_4]]$, $B_3({\bf g}) =[[[g_1,g_2],g_3],g_4]$, $B_4({\bf g}) =[[g_2,g_3],g_4]$.

 \noindent The {\it degree} (or {\it length}) of a  formal bracket is   the number $\ell_{_B}$ of letters that are   involved in it.  For instance, the brackets $B_1, B_2, B_3, B_4$ have degrees equal to  $3$, $4$, $4$, and $3$, respectively.  By convention,  a single  variable $X_i$  is a formal bracket of degree $1$. {Given a formal bracket $B$ of degree $\ge2$, then there exist formal brackets $B_1$ and $B_2$ such that $B=[B_1,B_2]$. The pair $(B_1,B_2)$ is univocally determined and it is called  the {\em factorization} of $B$.}

\noindent The  {\it switch-number}  of a formal  bracket  $B$ is the number $\mathfrak{s}_{_B}$ defined recursively  as:  
\[ 
\mathfrak{s}_{_B} := 1 \  \text{ if $\ell_{B}=1$}, \qquad  
\mathfrak{s}_{_{B}}:= 2\big(\mathfrak{s}_{_{B_1}}+\mathfrak{s}_{_{B_2}}\big)\  \text{ \ if  \ $\ell_B\geq2$ \ and  \ ${B}=[B_1,B_2]$.}
\] 
\noindent For instance, the switch-numbers of $[[X_3,X_4],[[X_5,X_6],X_7]]$ and $[[X_5,X_6],X_7]$ are $28$ and $10$, respectively. When  no  confusion may arise,  we  also speak of  `degree and  switch-number of  Lie brackets of vector fields'.
It may happen that brackets with the same degree have different switch numbers. However, if we set,  for any integer $k\geq1$,
\bel{bk}
\begin{cases}
	\beta(k) = 2[\beta(k-1)+1] \quad \text{if }k\geq2, \\
	\beta(1) = 1,
\end{cases}
\eeq and if $B$ is a formal bracket with degree $\ell_{_B}\leq k$, then 
$\mathfrak{s}_{_B} \leq \beta(k)$. 

\noindent We will use the following notion of {\em admissible bracket pair}:

 \begin{definition}\label{classCB}    Let $c\geq 0$,  $\ell\ge 1$, $q\geq c+\ell$ be integers, let  $B = B(X_{c+1},\ldots,X_{c+\ell})$ be  an iterated  formal bracket  and let   ${\bf g}=(g_1,\ldots,g_q )$  be a string of continuous vector fields. We say that  ${\bf g}$  is {\it of class $C^B$}   if  there exist non-negative integers $k_1\dots,k_q$ such that, by the only information that $g_i$ is of class  $C^{k_i}$ for every $i=1,\dots,q$  , one can deduce that 
$B({\bf g})$ is a  $C^0$ vector field  (see \cite[Def. 2.6]{FR2}). In this case,  we call $(B,{\bf g})$ an {\em admissible bracket pair} (of degree $\ell$ and switch number $\mathfrak{s}:=\mathfrak{s}_{_B}$).
  \end{definition}
 
 \noindent  For instance, if   $B= [ [[X_3,X_4],[X_5,X_6]],X_7 ]$ and  ${\bf g}=  
  (g_1,g_2,g_3,g_4,g_5,g_6,g_7,g_8)$,  then ${\bf g}$  is of class $C^B$ provided   $g_3,g_4,g_5,g_6$ are of class  $C^{3}$ and $g_7\in C^{1}$.

\section{Degree-$k$  sample stabilizability  with regulated cost} \label{section_mrf}
 Let us recall from  \cite{FMR1}  the  definitions of    degree-$k$ feedback generator,  sampling process,   and degree-$k$ sample stabilizability with regulated cost. 
 \vsm
 Throughout the whole paper $k\ge1$ will be a given integer and
we will  consider the following sets of hypotheses:
{\em  
\begin{itemize}
\item[{\bf (H1)}]
The control set  $A\subset \R^m$ is defined as  $A:=\{\pm e_1, \dots, \pm e_m\}$ and 
the target  $\T\subset\R^n$ is a closed subset with compact boundary.
 \vsm
\item[{\bf (H2)}]   the Lagrangian $l:\R^n\times A\to \R_{\ge0}$  is such that,  for any $a\in A$, the function $\R^n\ni x\mapsto l(x,a)$, is locally  Lipschitz continuous. Furthermore, the  vector fields $f_1,\dots, f_m:\R^n\to\R^n$ belong to   $C^{k-1,1}_{b}(\Omega)$ for any bounded, nonempty subset $\Omega\subset\R^n$.  
 \end{itemize}}

We set $\d(X):={\rm dist}(X,\T)$ for any $X\subset\R^n$. If $X=\{x\}$ for some $x\in\R^n$, we will simply write $\d(x)$ in place of $\d(\{x\})$. 

\subsection{Admissible trajectories and global asymptotic controllability with regulated cost}

\begin{definition}[Admissible  controls, trajectories, and costs]\label{Admgen}   We say that  $(\alpha,y)$  is an {\em admissible control-trajectory pair}  if there is some $0<S_y\le +\infty$ such that:
	  \begin{itemize}
			\item[(i)]   $\alpha:[0,S_y[\to A$  is Lebesgue measurable; 
			\vsm
			\item[(ii)]  $y:[0,S_y[\to  \R^n\setminus \T$  is a  (Carathéodory)   solution   to the   control system
			\begin{equation}\label{control_sys}
				\dot y(s)= \sum_{i=1}^m  f_i(y(s))\, \alpha^i(s), 
			\end{equation}
		
satisfying,  if $S_y<+\infty$, $\lim_{s\to S_y^-}\d(y(s))=0$.  
	\end{itemize}
	Given an admissible pair $(\alpha,y)$,  we refer to $\mathfrak{I}$ given by
	\begin{equation}\label{Pgen}
				\mathfrak{I}(s):= \int_ 0^{s }  l(y(\sigma),\alpha(\sigma))\, d\sigma, \quad \forall   s\in[0,S_y[
			\end{equation}
	 as the {\it integral cost}, and to $(\alpha,y,\mathfrak{I})$  as an {\em admissible control-trajectory-cost triple}.	For every $x\in\R^n \setminus \T$,  we call   $(\alpha,y)$,  $(\alpha,y,\mathfrak{I})$ as above with $y(0)=x$,  an {\em admissible pair  from $x$} and an {\em admissible triple  from $x$}, respectively.
	\vsm		
	For any admissible pair or triple   such that  $S_y<+\infty$, we  extend   $\alpha$, $y$, and	$\mathfrak{I}$  to $\R_{\geq0}$   by setting $\alpha(s):=\bar a$,  $\bar a\in A$ arbitrary,  and   $ (y,	\mathfrak{I})(s):= \lim_{\sigma\to S_y^-} (y(\sigma),	\mathfrak{I}(\sigma)),$  for any $s\geq S_y$.\footnote{Thanks to hypotheses {\bf (H1)}, {\bf (H2)}, this limit always exists.}
\end{definition}


Let us recall the notion of global asymptotic controllability with regulated cost,  as formulated in \cite{FMR1}.
 \begin{definition}[Global asymptotic controllability with regulated cost] \label{GAC_costo}
	We say that system \eqref{control_sys} is  \textit{globally asymptotically controllable} (in short, GAC) {\em to $\T$} if,  for any   $0<r<R$,    for every $x\in \R^n$ with $\d(x)\leq R$ there exists an admissible  control-trajectory pair $(\alpha,y)$ from $x$ that satisfies the following conditions (i)--(iii):
	$$
	\begin{array}{lllll}
		
		&{\rm (i)} &{\bf d}(y(s)) \leq {\bf \Gamma}(R)\qquad &\forall s \geq 0; \qquad\qquad\qquad &\text{({\it Overshoot boundedness})}\\[1.5ex]
		&{\rm (ii)} &{\bf d}(y(s)) \leq r \qquad &\forall s \geq {\bf S}(R,r);  &\text{({\it Uniform attractiveness})}\\[1.5ex]
		&{\rm (iii)} &\ds\lim_{s \to +\infty} {\bf d}(y(s)) = 0, &\, &\text{({\it Total attractiveness})}
	\end{array}
	$$
	where    $\Gamma:\R_{\ge0}\to\R_{\ge0}$ is a function  with $\Gamma(0)=0$ and $\mathbf{S}:\R_{>0}^2\to \R_{>0}$.	If, moreover, there exists a  function ${\bf W}:\overline{\R^n\setminus\T}\to\R_{\geq0}$ continuous, proper, and positive definite, such that the admissible control-trajectory-cost triple $(\alpha,y,\mathfrak{I})$ associated with  $(\alpha,y)$ above,  satisfies
	$$
	\begin{array}{l}
		\qquad{\rm (iv)} \,\,\, \ds \int_0^{S_y} l(\alpha(s),y(s)) \, ds \leq {\bf W}(x) \,\,\,\qquad\qquad \,\, \text{({\it Uniform cost boundedness})}
	\end{array}
	$$
	($S_y\le +\infty$   as in Def. \ref{Admgen}), we say that  system  \eqref{control_sys} is  \textit{globally asymptotically controllable to $\T$ with} ${\bf W}$-{\em regulated cost} (or simply, {\em with regulated cost}).
\end{definition}


 \subsection{Degree-k feedback generator} \label{def_control}
Let us introduce the sets of admissible bracket pairs associated with the vector fields $\pm f_1,\dots,  \pm f_m$  in the dynamics.  
 
\begin{definition}[Control label] \label{Lie_algebra} For any integer $h$ such that $1\le h\le k$,   let us define the set 
$\F^{(h)}$ of {\it control labels of degree $\leq h$} as 
\[ 
\F^{(h)} := \left\{ 
(B, \mathbf{g},\sgn) \ \left| 
\begin{array}{l} 
\ \sgn\in\{+,-\}  \text{ and $(B, \mathbf{g})$  is an admissible bracket} \\
 \text{ pair of degree $l_B\le h$ such that $\mathbf{g}:=(g_1,\ldots,g_q)$} \\
 \text{  satisfies $g_j\in \{f_1,\dots,f_m\}$ for any $j=1,\dots,q$}
\end{array}
\right. 
\right\}.
\]
We will call {\em degree} and {\em switch number  of a control label} $(B, \mathbf{g},\sgn)\in \F^{(h)}$,  the degree and the switch number of $B$, respectively.  
 \end{definition}
 With any control label  in $\F^{(k)}$ we associate an {\em oriented control}: 
	\begin{definition}[Oriented control]\label{orcon} Consider a time $t>0$ and  $(B, \mathbf{g},+), (B, \mathbf{g},-)  \in \F^{(k)}$.  
		  We define  the corresponding {\em oriented controls} ${\alpha}_{(B,{\bf g},+),t}$,  ${\alpha}_{(B,{\bf g},-),t}$, respectively, by means of the following recursive procedure:
	 	\begin{itemize} 
			\item[(i)] if $\ell_B=1$, i.e. $B=X_j$   for some integer $j\geq 1$,  we set
			$${\alpha}_{(B,{\bf g},+),t}(s):= e_i \qquad  \text{for any $s\in [0,t]$,}
		$$
		where   $i\in\{1,\dots,m\}$  is such that $B(\mathbf{g}) = f_i$   (i.e. $g_j=f_i$);
		\vsm
		\item[(ii)] if $\ell_B\geq 1 $,  we set $ {\alpha}_{(B,{\bf g},-),t}(s):=  -{\alpha}_{(B,{\bf g},+),t}(t-s)$ for any $s\in [0,t]$;
		\vsm
			\item[(iii)] if $\ell_B\ge2$ and $B=[B_1,B_2]$, we set $\mathfrak{s}_1:=\mathfrak{s}_{B_1}$,  $\mathfrak{s}_2:=\mathfrak{s}_{B_2}$, and  \newline $\mathfrak{s}:=\mathfrak{s}_B(=2\mathfrak{s}_1+2\mathfrak{s}_2)$ and,  for any $s\in [0,t]$, we posit	
				\end{itemize} 
\bel{definizione_control}
{\alpha}_{(B,{\bf g},+),t}(s):=
\begin{cases}
{\alpha}_{(B_1,{\bf g},+),\frac{\mathfrak{s}_1}{\mathfrak{s}}t}(s)   \quad  &\text{if }s\in \left[0,  \frac{\mathfrak{s}_1}{\mathfrak{s}}t \right[ \vspace{0.1cm}\\ 
{\alpha}_{(B_2,{\bf g},+),\frac{\mathfrak{s}_2}{\mathfrak{s}}t} \left(s - \frac{\mathfrak{s}_1}{\mathfrak{s}}t \right)   \quad &\text{if }s\in \left[\frac{\mathfrak{s}_1}{\mathfrak{s}}t,  \frac{\mathfrak{s}_1 + \mathfrak{s}_2}{\mathfrak{s}}t \right[ \vspace{0.1cm}\\
{\alpha}_{(B_1,{\bf g},-),\frac{\mathfrak{s}_1}{\mathfrak{s}}t}   \left(s -  \frac{\mathfrak{s}_1 + \mathfrak{s}_2}{\mathfrak{s}}t \right) \,\,\, &\text{if }s\in \left[ \frac{\mathfrak{s}_1 + \mathfrak{s}_2}{\mathfrak{s}}t,  \frac{2\mathfrak{s}_1 + \mathfrak{s}_2}{\mathfrak{s}}t \right[\vspace{0.1cm} \\
{\alpha}_{ (B_2,{\bf g},-),\frac{\mathfrak{s}_2}{\mathfrak{s}}t}  \left(s - \frac{ 2\mathfrak{s}_1 + \mathfrak{s}_2}{\mathfrak{s}}t \right)  \,  &\text{if }s\in \left[ \frac{ 2\mathfrak{s}_1 + \mathfrak{s}_2}{\mathfrak{s}}t, t \right].
\end{cases}
\eeq 
	
	\end{definition}

	\begin{example} 
If $B=[[X_3,X_4],[X_5,X_6]]$, $\mathbf{g}=(f_3,f_2,f_1,f_2,f_3,f_4,f_2,f_3)$ and $t>0$ one has 	  
 \[
{\alpha}_{(B,{\bf g},+),t}(s)\,\,=
\begin{cases}
\begin{aligned}
& e_1 \qquad &\text{if } s\in [ 0, t/16 [ \cup [ 9t/16, 10t/16  [   \\
& e_2 \qquad &\text{if } s\in[ t/16 , 2t/16  [  \cup [ 8t/16, 9t/16  [  \\
& e_3 \qquad & \text{if } s\in[4t/16 ,5t/16  [  \cup [ 13t/16, 14t/16  [  \\
& e_4 \qquad & \text{if } s\in[5t/16 ,6t/16  [  \cup [ 12t/16, 13t/16  [  \\
& -e_1 \qquad &\text{if } s\in[2t/16,3t/16[   \cup  [11t/16, 12t/16 [\\
& -e_2 \qquad &\text{if } s\in[ 3t/16 ,4t/16[  \cup [10t/16,11 t/16 ]  \\
& -e_3 \qquad & \text{if } s\in[6t/16 ,7 t/16[  \cup [15t/16,t] \\
& -e_4 \qquad & \text{if } s\in[7t/16 ,8t/16  [  \cup [ 14t/16, 15t/16  [
\end{aligned}
\end{cases} 
\]
(and ${\alpha}_{(B,{\bf g},-),t}$ can be obtained according to Def. \ref{Lie_algebra},\,(ii)).
\end{example}
 	
 Let us recall a crucial  formula  of  Lie bracket approximation  in \cite[Thm. 3.7]{FR2}, in a form proved in \cite[Lemma 3.1]{Fu}. 
 
\begin{lemma}\label{L_asym}
Assume {\bf (H1)}-{\bf (H2)} and fix $\tilde R>0$. 
Then,   there exist $\bar\delta>0$ and $\w>0$ such that for any $x \in \overline{\B(\T,\tilde R)\setminus\T} $,  any control label $(B, \mathbf{g},\sgn)\in \F^{(k)}$ of degree  $\ell$ and switch number $\mathfrak{s}$,   and any   $t\in[0,\bar\delta]$, there exists a    (unique)   solution $y(\cdot)$  to the Cauchy problem
$$
\dot y(s) =\displaystyle\sum_{i=1}^m f_i(y(s)){\alpha}_{(B,{\bf g},\sgn),t}^i(s), \qquad
 y(0) = x,
$$ 
  defined on the whole interval $[0,t]$ and   satisfying\footnote{We use the notation `$\sgn \, B( \mathbf{g})$' to mean `$+  B( \mathbf{g})$' if $\sgn=+$ and  `$- B( \mathbf{g})$' if $\sgn=-$.}
\bel{asymptotic_formula-}
y(s)\in \B(\T,2\tilde R) \ \ \forall s\in [0,t],  \quad \left| y(t) - x -\sgn \, B( \mathbf{g})(x) \left( \frac{t}{\mathfrak{s}} \right)^\ell  \right| \leq \w\,  t \left(\frac{t}{\mathfrak{s}}\right)^\ell.
\eeq 
\end{lemma}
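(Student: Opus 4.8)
The plan is to argue by induction on the degree $\ell$ of the bracket, after a few reductions. First I would fix $\tilde R>0$ and set $\K:=\B(\partial\T,2\tilde R)$, which is compact because $\partial\T$ is compact by {\bf (H1)}; note that every $x\in\overline{\B(\T,\tilde R)\setminus\T}$ lies within $\tilde R$ of $\partial\T$, hence in $\K$. By {\bf (H2)} the fields $f_1,\dots,f_m$ and their derivatives up to order $k-1$ are bounded on $\K$, say $|f_i|\le M$ with Lipschitz constant $L$. Although $\F^{(k)}$ is not literally finite (the string $\mathbf{g}$ may carry entries not used by $B$), the data that actually enter the conclusion --- the degree $\ell$, the switch number $\mathfrak{s}$, the field $B(\mathbf{g})$, and the oriented control $\alpha_{(B,\mathbf{g},\sgn),t}$ --- range over a finite family once $k$ and $m$ are fixed, so it suffices to find $\bar\delta,\w$ for each and then take the smallest $\bar\delta$ and the largest $\w$. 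Each oriented control is piecewise constant with values in $A$, so on every subinterval the problem reduces to an autonomous ODE $\dot y=\pm f_i(y)$ with Lipschitz right-hand side; concatenation yields a unique Carathéodory solution and $|\dot y|\le M$ gives $|y(s)-x|\le Ms$. Taking $\bar\delta\le\tilde R/M$ then forces $y(s)\in\B(\partial\T,2\tilde R)=\K\subset\B(\T,2\tilde R)$ for all $s\in[0,t]$, $t\le\bar\delta$, which is the claimed invariance and also keeps every intermediate initial point occurring below inside $\K$, legitimating the use of $M$ and $L$.

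For the quantitative estimate I would induct on $\ell$, writing $\epsilon:=t/\mathfrak{s}$ and denoting by $\Phi^{\sgn}_{C}(z)$ the value at the final time of the solution issuing from $z$ under the control $\alpha_{(C,\mathbf{g},\sgn),\tau}$ (with $\tau$ the relevant duration). The base case $\ell=1$, $\mathfrak{s}=1$ is the elementary Euler estimate: if $B(\mathbf{g})=f_i$ then $y(t)=x+f_i(x)t+\int_0^t(f_i(y(s))-f_i(x))\,ds$, so the Lipschitz bound gives $|y(t)-x-f_i(x)t|\le\tfrac{LM}{2}t^2$, i.e.\ \eqref{asymptotic_formula-} with $\w\ge LM/2$; the $\sgn=-$ case is identical with $-f_i$. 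For $\ell\ge2$ I would use the factorization $B=[B_1,B_2]$ of Definition~\ref{classCB}, set $F_j:=B_j(\mathbf{g})$, $\ell_j:=\ell_{B_j}$, $\mathfrak{s}_j:=\mathfrak{s}_{B_j}$, and exploit two facts built into Definition~\ref{orcon}. The time fractions $\mathfrak{s}_j/\mathfrak{s}$ in \eqref{definizione_control} are chosen exactly so that the $B_j$-block runs for time $\mathfrak{s}_j t/\mathfrak{s}$, whence its leading displacement is $F_j(z)\,\big(\tfrac{\mathfrak{s}_j t/\mathfrak{s}}{\mathfrak{s}_j}\big)^{\ell_j}=F_j(z)\,\epsilon^{\ell_j}$ with remainder $O(\epsilon^{\ell_j+1})$ by the inductive hypothesis; and the time-reversed, sign-reversed control of Definition~\ref{orcon}(ii) inverts the block flow \emph{exactly}, so that $\Phi^{-}_{B_j}=(\Phi^{+}_{B_j})^{-1}$.

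The heart of the argument is that, by (ii) again, the four-block concatenation \eqref{definizione_control} is precisely the group commutator
\begin{equation*}
\Phi^{+}_{B}=(\Phi^{+}_{B_2})^{-1}\circ(\Phi^{+}_{B_1})^{-1}\circ\Phi^{+}_{B_2}\circ\Phi^{+}_{B_1}.
\end{equation*}
Expanding this composition about $x$ and collecting powers of $\epsilon$, the zeroth- and first-order terms cancel, and --- crucially --- the pure self-interaction terms of order $\epsilon^{2\ell_1}$ and $\epsilon^{2\ell_2}$ cancel as well precisely because \emph{exact} inverses are used (the would-be second-order flow term of $\Phi^{+}_{B_1}$ is undone by $(\Phi^{+}_{B_1})^{-1}$, the interposed $\Phi^{+}_{B_2}$ producing only cross terms); this cancellation is essential, since otherwise these terms would corrupt the leading order when $\ell_1=\ell_2$. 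What survives at order $\epsilon^{\ell_1+\ell_2}=\epsilon^{\ell}$ is the commutator $(DF_2\,F_1-DF_1\,F_2)(x)=[F_1,F_2](x)=B(\mathbf{g})(x)$, in the sign convention of the paper. Finally, the sub-block remainders $O(\epsilon^{\ell_j+1})$ enter only through expressions of the form $\epsilon^{\ell_1}O(\epsilon^{\ell_2+1})$ and $O(\epsilon^{\ell_1+1})\epsilon^{\ell_2}$, hence contribute at order $\epsilon^{\ell+1}$, which is absorbed into $\w\,t\,\epsilon^{\ell}$; this closes the induction, and the $\sgn=-$ case follows once more from Definition~\ref{orcon}(ii).

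I expect the real obstacle to be the smoothness bookkeeping rather than the commutator algebra. Each bracketing costs one derivative, so a degree-$\ell_j$ block differentiates the data about $\ell_j-1$ times; the class $C^{k-1,1}_b$ in {\bf (H2)} is calibrated exactly so that, for $\ell\le k$, every intermediate field $B_j(\mathbf{g})$ is of class $C^{k-\ell_j,1}$ --- in particular at least $C^{1,1}$, enough for the second-order Taylor expansions above with quantitatively controlled remainders --- while the top field $B(\mathbf{g})$ is at least continuous, as guaranteed by the class $C^B$ of Definition~\ref{classCB}. Converting the formal cancellations into honest remainder bounds under this minimal regularity, and verifying that the resulting $\bar\delta,\w$ are uniform over the finite family of relevant data and over the compact collar $\K$, is the delicate part; it is exactly the content of the asymptotic formula \cite[Thm.~3.7]{FR2}, in the form of \cite[Lemma~3.1]{Fu}.
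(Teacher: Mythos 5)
Your skeleton --- reduction to the compact collar $\B(\partial\T,2\tilde R)$, existence/uniqueness by piecewise-constant controls, exact inversion of each block via the time-reversed sign-reversed control (valid precisely because the system is driftless control-affine), the group-commutator identity for the four-block concatenation, and induction on the degree through the factorization $B=[B_1,B_2]$ --- is the standard route, and it is worth noting that the paper itself gives \emph{no} proof of Lemma \ref{L_asym}: it imports it verbatim from \cite[Thm. 3.7]{FR2}, in the form of \cite[Lemma 3.1]{Fu}. So any self-contained argument necessarily goes beyond what the paper does, and yours is a reasonable reconstruction of the cited proof's strategy.

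There is, however, a genuine gap in your inductive step. The hypothesis you carry is only the sup-norm bound \eqref{asymptotic_formula-}, i.e. $\Phi^{\pm}_{B_j}(z)=z\pm F_j(z)\epsilon^{\ell_j}+R^{\pm}_j(z)$ with $|R^{\pm}_j|\le C\epsilon^{\ell_j+1}$. In the composition $(\Phi^{+}_{B_2})^{-1}\circ(\Phi^{+}_{B_1})^{-1}\circ\Phi^{+}_{B_2}\circ\Phi^{+}_{B_1}$ these remainders enter \emph{additively}, each at its own order $\epsilon^{\ell_j+1}$ --- not, as you assert, only through products $\epsilon^{\ell_1}O(\epsilon^{\ell_2+1})$ and $O(\epsilon^{\ell_1+1})\epsilon^{\ell_2}$. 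Since $\ell_j+1\le\ell_1+\ell_2=\ell$ whenever the other block has positive degree, an uncancelled sum such as $R^{+}_2(w_1)+R^{-}_2(w_3)$ is a priori of order $\epsilon^{\ell_2+1}$, which equals the commutator order $\epsilon^{\ell}$ when $\ell_1=1$ and strictly dominates it when $\ell_1\ge2$ (e.g. $\epsilon^{2}$ versus $\epsilon^{3}$ for $\ell_1=2$, $\ell_2=1$): without cancellation the leading term is destroyed. The cancellation you invoke is real, but exploiting it forces you to compare $R^{-}_2$ at $w_3=(\Phi^{+}_{B_1})^{-1}(w_2)$ with $R^{-}_2$ at the matched point $w_2=\Phi^{+}_{B_2}(w_1)$ furnished by the identity $\Phi^{-}_{B_2}\circ\Phi^{+}_{B_2}=\mathrm{id}$; these two points are a distance $O(\epsilon^{\ell_1})$ apart, and a mere sup-bound on $R^{-}_2$ gives no control on that difference. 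What is missing is a \emph{Lipschitz-in-the-initial-point} estimate of the same order on the remainders, $\mathrm{Lip}(R^{\pm}_j)\le C\epsilon^{\ell_j+1}$ (equivalently, control of $D\Phi^{\pm}_{B_j}-I\mp DF_j\,\epsilon^{\ell_j}$), carried as part of the inductive hypothesis; indeed your product-form bookkeeping is exactly what one obtains \emph{after} assuming it, via $\mathrm{Lip}(R^{-}_2)\cdot O(\epsilon^{\ell_1})=O(\epsilon^{\ell+1})$. Establishing and propagating this strengthened estimate (using $C^{k-1,1}_b$ regularity) is the actual technical content of \cite[Thm. 3.7]{FR2} --- the very step you defer to the references, which is to say, your argument ultimately reduces to the same citation the paper makes.
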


\begin{remark}\label{R_as_f} In general,  $\bar\delta$  and  $\w$ in the above lemma    do  depend   on $\tilde R$, while  they are independent of $\tilde R$ as soon as  the   hypothesis  $f_1,\dots,f_m\in C^{k-1,1}_b(\R^n)$ replaces the assumption on $f_1,\dots,f_m$ in {\bf (H2)}.
\end{remark}

	\begin{definition}[Degree-$k$ feedback generator] \label{feedback}
		We call {\em degree-$k$ feedback generator} any map $\V:\R^n\setminus\T\to \F^{(k)}$  and write  
		 \bel{les}
\begin{array}{l}
\quad\qquad x\mapsto  \mathcal{V}(x):=(B_x,\mathbf{g}_x,\sgn_x),   \\[1.5ex]
 \ell(x) := \ell_{_{B_x}}, \qquad \mathfrak{s}(x) :=\mathfrak{s}_{_{B_x}}\quad \forall x\in \R^n\setminus\T.  
 \end{array}
 \eeq

		\end{definition}
\begin{definition}[Multiflow] \label{multiflow} 
	Given a degree-$k$ feedback generator  $\mathcal{V}$, for every  $x\in\R^n\setminus\T$  and $t>0$,  we define  the control
	$	\alpha_{x,t}:\R_{\geq0}\to A$, as
	$$
	\alpha_{x,t}(s):={\alpha}_{\mathcal{V}(x),t}(s\wedge t), \qquad \text{ for any  $s\in\R_{\geq0}$.}
	$$
We will refer to the maximal  solution\footnote{By maximal solution we mean  the solution to \eqref{ytx} defined on the largest subinterval of $\R_{\ge0}$ which contains $0$.}    to the Cauchy problem 
	\bel{ytx}
	\dot y(s)= \sum_{i=1}^m f_i\big(y(s)\big)\alpha^i_{x,t}(s), \qquad
		y(0)=x,   
		\eeq 
		as the  {\em $\mathcal{V}$-multiflow starting from $x$ up to the time $t$}  (or simply   {\em $\mathcal{V}$-multiflow}) and will be denoted by $y_{x,t}$.\footnote{By Lemma \ref{L_asym}, given $\tilde R>0$  there is  some $\bar \delta>0$ such that, for all $x\in\R^n\setminus\T$ with  $\d(x)\le \tilde R$ and all $t\in[0,\bar \delta]$,   the trajectory  $y_{x,t}$ is defined on the whole interval $[0,t]$.} 
\end{definition}

\subsection{
Sampling processes}
 We call $\pi:=\{ s_j \}_{j}$    {\em a partition of $\R_{\ge0}$} if $s_0=0$, $s_j  < s_{j+1}$ for any $j\in\N$,  and $\ds\lim_{j\to+\infty}s_j=+\infty$. The {\em sampling time}, or   {\em diameter,} of $\pi$  is defined as the supremum of the set  $\{s_{j+1}-s_j: \ \ j\in\N\}$.
 
\begin{definition}[$\V$-sampling  process]\label{k_traj}\mmR{.}   
Given a degree-$k$ feedback generator $\V$,   we refer to 
$(x, \pi, \alpha_x^\pi, y_x^\pi)$ as a {\em $\V$-sampling process}  if $x\in\R^n\setminus\T$, $\pi:=\{s_j\}_j$ is a partition of $\R_{\geq0}$,   $y_x^{\pi}$ is a continuous function  taking values in $\R^n\setminus\T$  defined  recursively by  
\bel{traj_generalizzate}
\begin{cases}
y_x^{\pi}(s) := y_{x_j,t_j}(s-s_{j-1}) \qquad \text{ for all   $s\in [s_{j-1}, \sigma_j[$ \, and \, $1\leq j \leq\J$} \\
y_x^{\pi}(0) = x,
\end{cases}
\eeq
where,  for all $j\geq 1$, $y_{x_j,t_j}$ is a $\V$-multiflow with $t_j:= s_j-s_{j-1}$, 
  $x_j:= y_x^{\pi}(s_{j-1})$ for all $1\leq j \leq \J$,  and\footnote{We mean $\J=+\infty$    if the set is empty.}
$$
\begin{array}{l}
 \sigma_j :=  \sup\left\{ \sigma\ge s_{j-1}  \text{ : } y_{x_j, t_j} \text{ defined on } [s_{j-1},\sigma[, \  y_{x_j, t_j}([s_{j-1},\sigma[)\subset\R^n\setminus\T \right\},\\[1.0ex]
\J :=\inf\{j: \ \sigma_j\le s_j\}.
\end{array}
$$
We will refer to the map $y_x^\pi:[0, \sigma_{\J }[\to \R^n\setminus\T$  as  a {\em $\V$-sampling trajectory}.
 According to Def. \ref{multiflow}, the  corresponding  {\em $\V$-sampling control} $\alpha_{x}^{\pi}$ is defined as
\[
\alpha_x^{\pi}(s) := \alpha_{x_j, t_j}(s - s_{j-1}) \qquad \text{for all }s\in[s_{j-1}, s_j[\,\cap\,[0, \sigma_{\J }[, \quad 1\leq j\le \J.
\]
Furthermore, we define the {\em $\V$-sampling cost} $\mathfrak{I}_x^{\pi}$ as
\begin{equation}\label{Scostgen}
		\mathfrak{I}_x^{\pi}(s):=\int_0^s   l(y_x^{\pi}(\sigma),\alpha_x^{\pi}(\sigma))\, d\sigma,  \quad \forall s\in[0, \sigma_{\J }[,
			\end{equation}
and we call $(x, \pi, \alpha_x^\pi, y_x^\pi, \mathfrak{I}_x^\pi)$  a {\em $\V$-sampling process-cost}.
\end{definition}
 If  $(\alpha_x^{\pi},y_x^{\pi})$ [resp. $(\alpha_x^{\pi},y_x^{\pi}, \mathfrak{I}_x^{\pi})$] is an admissible pair [resp. triple] from $x$,    we say that the $\V$-sampling process $(x, \pi, \alpha_x^\pi, y_x^\pi)$ [resp. $\V$-sampling process-cost $(x, \pi, \alpha_x^\pi, y_x^\pi,\mathfrak{I}_x^{\pi})$] is {\em admissible}. In this case,
 when $ \sigma_{\J }=S_{y_x^\pi}<+\infty$, we extend $\alpha_x^{\pi}$, $y_x^{\pi}$, and $\mathfrak{I}_x^{\pi}$ to $\R_{\geq0}$,  according to  Def. \ref{Admgen}.

\vsm

In the definition of degree-$k$ sample stabilizability with regulated cost below, we will   consider only $\V$-sampling processes belonging to the subclass of   {\it$\del$-scaled $\V$-sampling processes}, defined as follows. 


\begin{definition}   [$\del$-scaled $\V$-sampling process-cost] \label{vconsistent}
	Given  $\del:=(\delta_1,\ldots,\delta_k)$ in $\R^k_{>0}$, referred to as a {\it multirank},  and a degree-$k$ feedback generator $\V$, we say that $(x, \pi, \alpha_x^\pi, y_x^\pi, \mathfrak{I}_x^\pi)$ [resp. $(x, \pi, \alpha_x^\pi, y_x^\pi)$] is a {\em $\del$-scaled $\V$-sampling process-cost}  [resp. $\del$-scaled $\V$-sampling process] provided  it is  a $\V$-sampling  process-cost [resp. $\V$-sampling process] such that the partition $\pi=\{s_j\}_j$ satisfies
		\bel{vconsistent_part}
		\Delta(k) \, \delta_{\ell_j} \leq s_j - s_{j-1}
		 \leq \delta_{\ell_j} \qquad \forall j\in\N, \ j\ge 1,
		\eeq
 where   $\ell_j := \ell(y_x^\pi (s_{j-1}))$ (see Def. \ref{les})  and  $\Delta(k):= \frac{k-1}{k}$.
\end{definition}

\begin{remark}\label{multirank}
When $k=1$,  the degree-$1$ feedback generator $\V$ takes values in $\F^{(1)}$, so that, in view of Def. \ref{orcon},  a $\V$-sampling process $(x, \pi, \alpha_x^\pi, y_x^\pi)$ is nothing but a standard $\pi$-sampling process  associated with the  feedback law $\alpha(x):=\sgn_x\,e_i$, as soon as $\V(x)=(B_x,\mathbf{g}_x,\sgn_x)$ and $B_x(\mathbf{g}_x) = f_i$.    
In particular, in this case  $\Delta(1)=0$,  $\del=\delta>0$, and  a $\del$-scaled $\V$-sampling process $(x, \pi, \alpha_x^\pi, y_x^\pi)$ coincides with a standard $\pi$-sampling process  such that  diam$(\pi)<\delta$,  exactly as required  in the classical notion of sample stabilizability  (see, for instance, \cite{CLSS,CLRS}).
  When $k>1$,  the notion of  $\del$-scaled $\V$-sampling process  is more restrictive. In particular,  \eqref{vconsistent_part} prescribes for each such process $(x, \pi, \alpha_x^\pi, y_x^\pi)$  both an upper and a lower bound  on the amplitude  $s_j - s_{j-1}$ of every sampling interval,  depending on the degree $\ell_j\in\{1,\dots,k\}$ of $\V(y_x^\pi (s_{j-1}))$. Considering this subclass of processes will actually be crucial in the proof of Thm. \ref{Th_strong}. Indeed, when    a trajectory $y_x^\pi$ defined  on an interval $[s_{j-1},s_{j}]$   approximates the direction of a degree-$\ell_j$ Lie bracket  of length $\ell_j>1$ for the time $t_j:=s_j - s_{j-1}\,(<1)$, the displacement of $y_x^\pi$ is proportional to $t_j^{\ell_j}<t_j$    ---see the  asymptotic formula \eqref{asymptotic_formula-}. Hence,  the lower bound in condition \eqref{vconsistent_part} 
 ensures that for each sampling trajectory the sum of the displacements is divergent. This is  necessary   in order  to build stabilizing sampling  trajectories that uniformly  approach any fixed neighborhood of the target  while providing an  uniform upper bound for the cost  (see Def. \ref{samplestab_costo} below). Incidentally,  notice that  $\Delta(k)$ might be replaced by any function  null for $k=1$,  positive and smaller than 1 otherwise. 	
\end{remark}

\subsection{Degree-$k$  sample stabilizability with  regulated cost}
To state the notion of  {\it degree-$k$  sample stabilizability of system \eqref{control_sys} to $\T$ with  regulated cost}, we first need to provide the following definition. 

%
\begin{definition}[Integral-cost-bound function]\label{ib}  
We say that a function \newline ${\bf \Psi}:  \R_{>0}^3  \to\R_{\ge0}$ is an  {\it integral-cost-bound function} if
$$
{\bf \Psi}(R,v_1,v_2):=\Lambda (R)\cdot\Psi(v_1,v_2),
$$
where 
\begin{itemize}
\item[(i)]  $\Lambda:\R_{>0}\to\R_{>0}$   is  a continuous, increasing function and  $\Lambda\equiv 1$ if $k=1$;
\vsm
\item[(ii)]  $\Psi:  \R_{>0}^2  \to\R_{\ge0}$ is a continuous map, which is 
increasing and unbounded  in the first variable and   decreasing in the second variable;
\vsm
\item[(iii)]   there exists a  strictly decreasing bilateral sequence $(u_i)_{i\in\Z}\subset\R_{>0}$, such that, for some (hence, for any)  $j\in\Z$, one has
	\bel{ppsi}  \sum_{i=j}^{+\infty}\Psi(u_{i},u_{i+1})<+\infty,\quad\text{and} \lim_{i\to-\infty} u_i  = +\infty,  \quad  \lim_{i\to+\infty} u_i  = 0.  
	\eeq
	\end{itemize}	
\end{definition}

\begin{definition}[Degree-$k$ sample stabilizability with regulated cost]
\label{samplestab_costo}
 Let $\V$ be  a  degree-$k$ feedback generator and let $U:\overline{\R^n\setminus\T}\to \R_{\geq0}$ be a  continuous, proper,   and positive definite function.  We  say that $\V$ {\it degree-$k$  $U$-sample stabilizes system \eqref{control_sys} to  $\T$} if 
there exists a multirank map $\del:\R_{>0}^2\to \R_{>0}^k$  such that, for any $0<r<R$,   every $\del(R,r)$-scaled $\V$-sampling process $(x, \pi, \alpha_x^\pi, y_x^\pi)$ with $\d(x)\leq R$ is admissible and satisfies
		$$\begin{array}{l}
		 \text{(i)} \ {\bf d}(y_x^{\pi}(s)) \leq \Gamma(R)\quad \forall s \geq 0,
		\\[1.5ex] 
		 \text{(ii)} \
			 \mathbf{t}(y_x^\pi,r):=\inf\big\{s\geq0 \text{ : } U(y_x^\pi(s))\leq {\varphi}(r) \big\}\leq {\bf T}(R,r),
											\\[1.5ex] 
	 \text{(iii)} \ \text{if $\exists\tau>0$ such that} \ U(y_x^\pi(\tau))\leq {\varphi(r)}, \  \text{then}   \  \d(y_x^\pi(s))\leq r \  \ \forall s\geq\tau,
		\end{array}$$
		where $\Gamma:\R_{\ge0}\to\R_{\ge0}$,   $\varphi:\R_{\geq0}\to\R_{\geq0}$ are continuous, strictly increasing and unbounded    functions  with $\Gamma(0)=0$, $\varphi(0)=0$,  and $\mathbf{T}:\R_{>0}^2\to \R_{>0}$ is a function increasing in the first variable and decreasing in the second one.
	We will refer to property (i) as {\em Overshoot boundedness}  and to (ii)-(iii) as {\em $U$-Uniform attractiveness}. 
	
		If, in addition, there exists an 
  integral-cost-bound function
${\bf \Psi}: \R_{>0}^3\to \R_{ \ge0} $   such  
	 that the $\V$-sampling process-cost $(x, \pi, \alpha_x^\pi, y_x^\pi,\mathfrak{I}_x^\pi)$ associated with the $\V$-sampling process $(x, \pi, \alpha_x^\pi, y_x^\pi)$ above  satisfies the inequality
	 $$
\text{(iv)}\,\, \ds \mathfrak{I}_x^\pi(\mathbf{t}(y_x^\pi,r)) = 	\int_0^{\mathbf{t}(y_x^\pi,r)}   l(y_x^{\pi}(s),\alpha_x^{\pi}(s))ds   \leq  {\bf \Psi}\Big(R,\, U(x), \,U(y_x^\pi(\mathbf{t}(y_x^\pi,r)))\Big)  
	 $$ 
we say that  $\V$ {\em  degree-$k$  $U$-sample stabilizes system  \eqref{control_sys} to $\T$ with ${\bf \Psi}$-regulated cost}. We will refer to (iv) as {\em Uniform cost  boundedness}.

  When there exist some function $U$,  some {degree}-$k$ feedback generator $\V$ [and an integral-cost-bound function ${\bf \Psi}$] such that  $\V$ degree-$k$ $U$-sample stabilizes  system  \eqref{control_sys} to  $\T$ [with ${\bf \Psi}$-regulated cost], we  say that  system  \eqref{control_sys} is {\em degree-$k$ $U$-sample stabilizable to $\T$}  [{\em with ${\bf \Psi}$-regulated cost}]. Sometimes, we will simply say that system  \eqref{control_sys} is {\em degree-$k$  sample stabilizable to $\T$}  [{\em with regulated cost}].\end{definition}

 \begin{remark}
 \label{R_imp}
 By Thm. \ref{Th_strong} below, the existence of a degree-$k$ MRF $U$ implies the notion of  degree-$k$ $U$-sample stabilizability  with regulated cost in  Def. \ref{samplestab_costo},  that might  look quite involved if  compared to classical stabilizability   concepts  (without cost).  
However, we think this is not the case, above all in view of the following facts:
\begin{itemize}
\item[(i)]
 {\em degree-$k$ sample stabilizability  with regulated cost  implies global asymptotic controllability  with regulated cost as in Def. \ref{GAC_costo}} (exactly as classical sample stabilizability implies global asymptotic controllability); and
 \vsm
 \item[(ii)] in the absence of a cost, {\em   system  \eqref{control_sys} is degree-$k$  sample stabilizable to $\T$   for  some $k\ge1$ in the sense of  Def. \ref{samplestab_costo} if and only if it is sample stabilizable to $\T$ in the classical sense of \cite[Def. I.3]{CLSS}, which is in turn equivalent to be degree-$k$  sample stabilizable to $\T$ according to \cite[Def. 2.18]{Fu}.}

\item[(iii)] {\em degree-$1$ sample stabilizability with regulated cost in the sense of Def.   \ref{samplestab_costo} implies   sample stabilizability with regulated cost as defined in \cite{LM}.}
\end{itemize}

 The proofs of statements (i) and (ii) can be found in \cite{FMR1}.
 Moreover,  in view of Rem.  \ref{multirank} and using the notations of Def. \ref{samplestab_costo},  statement  (iii) follows from the following two considerations: first, $U$-uniform attractiveness immediately implies  the standard uniform attractiveness condition 
$$
\exists\, {\bf S}(R,r)>0 \quad \text{such that} \quad  \d(y_x^\pi(s)) \leq r \qquad \text{for all $s\geq {\bf S}(R,r)$}
$$
(with ${\bf S}(R,r)\le {\bf T}(R,r)$),   which in turn characterizes   classical  sample stabilizability (see e.g. \cite{CLSS});  secondly,  for $k=1$  the  uniform cost boundedness condition  (iv) in Def.   \ref{samplestab_costo} implies the cost  bound condition considered in  \cite{LM}, namely
  $$\begin{array}{c}\ds
\int_0^{{\bf s}(y_x^\pi,r)} l(y_x^{\pi}(s),\alpha_x^{\pi}(s))\, ds  \leq \int_0^{{\bf t}(y_x^\pi,r)} l(y_x^{\pi}(s),\alpha_x^{\pi}(s))\, ds \\[2.5ex]
\ \qquad\qquad\qquad\qquad\qquad\qquad\qquad\qquad\le \Psi(U(x),\varphi( r))\le \Psi(U(x),0)=:W(x),
\end{array}$$
where ${\bf s}(y_x^\pi,r):= \inf\{s\geq0 \text{ : } \d(y_x^\pi(\sigma))\leq r \ \text{ for all }\sigma\geq s\}$ \footnote{ Notice that  the $U$-uniform attractiveness   implies  ${\bf s}(y_x^\pi,r)\le   {\bf t}(y_x^\pi,r)$ }.  
\end{remark}

\section{The main result} \label{sec_results}
Together with the notion of {\it degree-$k$ Hamiltonian}, in  this section we provide our most important result: it states that the  existence of a  suitably defined solution $U$ 
to a  degree-$k$  Hamilton-Jacobi inequality  is a sufficient condition for the system to be   degree-$k$  sample stabilizable  with  regulated cost.

\subsection{Degree-$k$ Hamilton-Jacobi dissipative  inequality }
For any integer $h$, $1\le h\le k$, and any $(B, \mathbf{g},\sgn)\in \F^{(h)} $ let us define the subset $A(B, \mathbf{g},\sgn)\subseteq A$ of control values
\bel{abg}
A(B, \mathbf{g},\sgn):=
\begin{cases}
\{ e_i\}    &\text{if $B(\mathbf{g}) = f_i$ and $\sgn=+$}, \\[1.5ex]
\{-e_i\}    &\text{if $B(\mathbf{g}) = f_i$ and $\sgn=-$},  \\[1.5ex]
\big\{\pm e_{j_1},\ldots, \pm e_{j_\ell}\big\} 
&\left\{{\begin{array}{l}
\text{if $B=B(X_{c+1},\dots, X_{c+\ell})$, } \\\text{$2\leq \ell\leq h$, and $\mathbf{g}$ is such that} \\ 
\text{$g_{c+r} = f_{j_r}$ for $r=1,\ldots,\ell$.} 
\end{array}}\right.
\end{cases}	
\eeq

\begin{definition}[Unminimized  Hamiltonian] \label{unkHam}
We define 
 the   \textit{unminimized  Hamiltonian} ${\mathcal H}: (\R^n\setminus \T) \times \R^*\times (\R^n)^* \times \F^{(k)}\to\R$, as   
\[
{\mathcal H}\Big(x,p_0,p,(B, \mathbf{g},\sgn)\Big) :=  
   \langle p,\sgn\, B(\mathbf{g})(x) \rangle+ p_0  \max_{a \in  A(B, \mathbf{g},\sgn)} \, l(x,a) 
\]
for any $ \big(x,p_0,p,(B, \mathbf{g},{\sgn})\big) \in  (\R^n\setminus \T) \times\R^* \times (\R^n)^* \times \F^{(k)}$.

\end{definition}
\begin{definition}[Degree-$h$ Hamiltonian] \label{kHam}
	Given a continuous increasing function  $p_0:\R_{\ge 0}\to [0,1]$ and  an integer $h$, $1\le h\le k$,  we define
   the \textit{{\it degree}-$h$ Hamiltonian} 
   $$
   H[p_0]^{(h)}:(\R^n\setminus \T) \times (\R^n)^*\times\R \to \R,
   $$
 by setting, for every $(x,p,u) \in 
 (\R^n\setminus \T) \times (\R^n)^*\times\R$, 
$$
	H[p_0]^{(h)}(x,p,u) := \min_{(B, \mathbf{g},\sgn) \in \F^{(h)}}{\mathcal H}\Big(x,p_0(u),p,(B, \mathbf{g},\sgn)\Big).
$$
\end{definition}
Notice that minimum exists because the set 
of control labels of degree $\leq h$ is finite.
	Furthermore, under the standing hypotheses,   {\it degree}-$h$ Hamiltonians $H[p_0]^{(h)}$ are well defined and continuous for every $h\in\{1,\dots,k\}$.
Observe also that  
\bel{Hksub}
H[p_0]^{(k)} \leq H[p_0]^{(k-1)} \leq \dots \leq H[p_0]^{(1)},
\eeq
where  the  {\it degree}-1 Hamiltonian $H[p_0]^{(1)}$ reduces to  
$$
H[p_0]^{(1)}(x,p,u)=\min_{a\in A} \left\{\left\langle p,  \sum_{i=1}^m f_i(x)a^i\right\rangle+ p_0(u)\, l(x,a)\right\}. 
$$ 
As an example, let us consider the degree-$2$ Hamiltonian $H[p_0]^{(2)}$: 
\[
\begin{array}{l} H[p_0]^{(2)}(x,p,u) = H[p_0]^{(1)}(x,p,u)\ \ \wedge \\
 \ds\qquad\qquad\qquad \min_{i,j \in \{1,\dots,m\}} \Big\{ \langle p, [f_i, f_j](x) \rangle + \max_{a\in\{\pm e^i, \pm e^j \} }p_0(u) l(x,a) \Big\}.
\end{array}
\]
\begin{definition}[Degree-$k$  MRF] \label{kmrf}  A continuous  map $U: \overline{\R^n \setminus \T} \to \R$ is said to be a {\em degree-$k$  Minimum Restraint Function} (in short,  {\em degree-$k$  MRF}) if it is  proper,  positive definite, and satisfies the
	{\em HJ dissipative  inequality}  
	\bel{dissipative0}  
	H[p_0]^{(k)}(x,p, U(x))  \leq -\gamma(U(x)) \qquad \forall x \in \R^n\setminus \T, \ \  \forall p\in\partial_{\mathbf{P}}U(x),   
	\eeq
	for some continuous and increasing functions $p_0:\R_{\ge 0}\to [0,1]$ and $\gamma: \R_{\ge0}\to \R_{>0}$, to which we will refer  as the  {\em cost multiplier} and the  {\em dissipative rate}, respectively. Furthermore, we say that $U$ is a {\em degree-$k$ Control Lyapunov Function} (in short, {\em degree-$k$ CLF}) if it is a degree-$k$ MRF with $p_0\equiv0$.
\end{definition}

\begin{remark}\label{1-->k}   From \eqref{Hksub} it follows that for  all $q_1$, $q_2\in\N$, $1\le q_1<q_2\le k$, a {\it degree}-$q_1$   MRF (for some $p_0$ and $\gamma$) is also a {\it degree}-$q_2$  MRF  (for the same $p_0$ and $\gamma$), while the converse is false, in general. In particular, a smooth   degree-$k$ MRF for some $k>1$  may exist in situations where there are no smooth {\it degree}-1   MRFs (see the examples in \cite{MR3,MR2}).  Actually,  this is one of the main reasons for considering degree-$k$  MRFs with $k>1$. 
\end{remark}

\begin{remark}\label{CLF}  If  $U$ is   a degree-$k$  MRF for some $p_0$ and $\gamma$,  it is also  a degree-$k$ CLF. Indeed, since $p_0$ and the lagrangian $l$  are nonnegative, 
from the dissipative inequality \eqref{dissipative0} it follows that, for every $x \in \R^n\setminus \T$ and $p\in\partial_{\mathbf{P}}U(x)$, 
	\bel{clf}  
	H[0]^{(k)}(x,p,U(x))=\min_{(B, \mathbf{g},\sgn) \in \F^{(k)}}  \, \langle p,B(\mathbf{g})(x) \rangle  
	\leq -\gamma(U(x)).   
	\eeq
A notion of (locally semiconcave) degree-$k$ CLF was first introduced in \cite{MR2}. The definition of  degree-$k$  MRF  has been anticipated in   \cite{MR3}, in the special case of constant $p_0$ and lagrangian $l$ independent of the control  $a\in A$. 
 \end{remark}

\subsection{Main result}\label{mainre}
To prove that the existence of a degree-$k$ MRF $U$ implies degree-$k$ sample stabilizability with regulated cost,  we need additional assumptions. These conditions
include some integrability requirements on the cost multiplier $p_0$ and on the dissipative rate $\gamma$ and,   in case $k>1$, also the following  {\it $\nu$-semiconcavity} property for $U$, in a neighborhood of the target. 

\begin{definition}[$\nu$-semiconcavity] \label{nu_prop} 
	Let $\M\subset\R^n$ be a  nonempty subset and let $\nu\in[0,1]$.
	We say that a continuous function  $U:\overline{\R^n\setminus\T} \to\R$   {\em is {\it $\nu$-semiconcave on $\M\setminus\T$}}  if 
	there are some positive constants $L$,  $\theta$ and  $C>0$  such that  for every $x$, $\hat x\in  \M\setminus\T$ with  ${\rm sgm}(\hat x,x)\subset\M\setminus\T$ and $|\hat x-x|\le \theta$,  one has
	\bel{scv1}
	\begin{array}{l}
\ds	U(\hat x)-U(x)\leq \langle p, \hat x -x\rangle+ \frac{C}{\d({\rm sgm}(\hat x,x))^\nu} \, |\hat x-x|^2 \quad \forall p\in \partial U(x), \\[1.0ex]
	|p|\le L  \qquad \forall p\in \partial U(x).
	\end{array}
	\eeq
\end{definition}
 

\begin{remark}\label{Rsc0} If $U$  is a locally semiconcave  function on $\R^n\setminus\T$, then, 
	in view of  the property \eqref{semiconcave_est} of its subdifferential,  $U$  is $\nu$-semiconcave on $\M\setminus\T$  with $\nu=0$ for every compact set  $\M\subset\R^n\setminus\T$.    
 More generally,   the notion of  $\nu$-semiconcavity is an extension  of a condition concerning the  distance function $\d$ (from a closed set $\T$)  (see e.g \cite{Ma,MaRi}).  In particular, (see e.g. \cite{CS}) note that 
 	\begin{enumerate}
		\item    if   $\T$ has {\it boundary of class $C^{1,1}$}, then the distance $\d$ is  semiconcave in $\overline{\R^n\setminus \T}$  and    turns out to be $\nu$-semiconcave on $\R^n\setminus \T$ with $\nu=0$, $L=1$,  for any $\theta>0$ and for some  $C>0$.
		
		\item  If    $\T$ satisfies the {\it internal sphere condition  of radius $r>0$}, namely, for all $x\in\T$ there exists $\bar x\in\T$ such that $x\in \B(\bar x,r)\subset\T$,  then the distance $\d$ is  $\nu$-semiconcave on $\R^n\setminus\T$  with $\nu=0$   and satisfies \eqref{scv1} for $L=1$, and $C=1/r$,   for every $\theta>0$. 
		
		\item   If $\T$ {\it is a singleton},  then the distance $\d$  is  $\nu$-semiconcave in $\R^n\setminus\T$   with $\nu=1$  and \eqref{scv1}   holds for $L=C=1$,  for  every $\theta>0$.
	\end{enumerate} 
\end{remark}

We will also use   the following  hypothesis.
{\em  
\begin{itemize}
 \item[{\bf (H3)}]  Let $U$ be a  degree-$k$  MRF and let $p_0$, $\gamma$ be the associated cost multiplier and dissipative rate, respectively. 
 \begin{itemize}
  \item[{\rm (i)}]  If $k>1$, assume that, for some  $\nu \in[0,1]$ and $c>0$, $U$ is $\nu$-semiconcave  on  $\B(\T,c)\setminus\T$,  and that the map $\Theta: \R_{>0}\to \R_{>0}$, defined by
  \end{itemize}
	\bel{Theta}
\Theta(w):= \frac{1}{p_0(w)} \, \vee \, \frac{1}{p_0(w) w^{1-  k^{^{-1}}}} \, \vee \, \frac{1}{p_0(w) \gamma(w)^{k-1}} \, \vee \, \frac{1}{p_0(w) [w^\nu \gamma(w)]^{1-k^{^{-1}}}},
	\eeq
	\begin{itemize}
	\item[] 
	is integrable on $[0,u]$ for any $u>0$.
 \item[{\rm (ii)}] If $k=1$, assume that the map $\Theta$ above, i.e. $\ds\Theta(w)= \frac{1}{p_0(w)}$ 
  is integrable on $[0,u]$ for any $u>0$.
 \end{itemize} 
\end{itemize}}
\begin{remark}\label{RInt-a}  In some   situations, the integrability condition \eqref{Theta} above can be improved. For instance, assume that,  for some $M>0$,  $0<l(x,a)\le M$ for all $(x,a)\in(\R^n\setminus\T)\times A$. Then, given a degree-$k$   MRF  $U$ with cost multiplier $p_0$ and dissipative rate $\gamma$,  set
$$
\lambda(u):= \inf_{\{(x,a)\in\R^n\times A: \ U(x)\ge u\}}\,l(x,a)  \quad \forall u>0
$$
and  consider the  strictly increasing functions  
\bel{tildeg}
  \tilde\gamma(u):=\frac{1}{2}\left(p_0(u)\,\lambda(u) +\gamma(u)\right), \qquad   \tilde p_0(u):=\frac{1}{2}\left(p_0+\frac{\gamma}{M}\right).
\eeq
From \eqref{dissipative0} it follows that $U$ also satisfies  the HJ dissipative  inequality:  
$$
 H[\tilde p_0]^{(k)}(x,p, U(x))  \leq -\tilde\gamma(U(x)) \qquad \forall x \in \R^n\setminus \T, \ \  \forall p\in\partial_{\mathbf{P}}U(x).
$$
Hence, $U$ can be regarded as a degree-$k$ MRF with cost multiplier $\tilde p_0$ and  dissipative rate $\tilde\gamma$. Notice that condition \eqref{Theta} referred to $\tilde p_0$ and $\tilde\gamma$ is weaker than the one corresponding to $p_0$ and $\gamma$.    \end{remark}

Befor stating our main result, let us finally introduce a stronger, global  version of hypothesis {\bf (H2)}.
{\em  
\begin{itemize}
 \item[{\bf (H2)$^*$}] For any $a\in A$,  the map  $x\mapsto l(x,a)$   is  Lipschitz continuous on $\R^n$. Furthermore,  the vector fields $f_1,\dots, f_m$ belong to   $C^{k-1,1}_{b}(\R^n)$.  
 \end{itemize}}

\begin{theorem}[Main result]\label{Th_strong}     
 Assume hypotheses {\bf (H1)}-{\bf (H2)}  and let  $U$ be a degree-$k$ MRF,   which  we suppose to be locally semiconcave on $\R^n\setminus\T$. Then, there exists a degree-$k$ feedback generator $\V$ such that the following statements hold:
\begin{itemize}
\item[{\rm (i)}]   $\V$  degree-$k$  $U$-sample stabilizes   system \eqref{control_sys}  to $\T$.

\item[{\rm (ii)}] If  in addition  $U$ satisfies hypothesis  {\bf (H3)}, then   $\V$  degree-$k$  $U$-sample stabilizes system 
\eqref{control_sys}  to $\T$ with ${\bf \Psi}$-regulated cost,   the map   $$(R,v_1,v_2)\mapsto {\bf \Psi}(R,v_1,v_2)=\Lambda(R)\,\Psi(v_1,v_2)$$  being an integral-cost-bound function,  where $\Psi:\R_{>0}^2\to\R_{\ge0}$   is defined as  
\bel{Psi_Th}
\ds{  \Psi }(v_1,v_2) = \left\{\begin{array}{l}
\ds  0  \,\, \vee\,\, \int_{\frac{v_2}{2}}^{v_1}\Theta(w) \,dw
 \quad \qquad\qquad\qquad\text{if $k=1$,} \\[3ex]
\ds   0  \,\, \vee\,\,  \int_{\frac{v_2}{2}}^{v_1}\Theta\Big(\frac{v_2}{v_1}\,{w}\Big)\,dw \ \ \quad\qquad\quad\text{if $k>1$}
 \end{array}\right.
\eeq
($\Theta$ as in \eqref{Theta}).  In particular, in case $k>1$, if  $U$ (satisfies  {\bf (H3)} and) is  semiconcave  and Lipschitz continuous  on  $\R^n\setminus\T$\footnote{We point out that the notion of semiconcavity on $\R^n\setminus\T$ introduced in Subsection \ref{preliminari} implies that, given $\bar c>0$, there exists a semiconcavity constant $\eta$ for $U$ valid on    $\R^n\setminus B(\T,\bar c)$.}  and  hypothesis {\bf (H2)$^*$} is satisfied, then  $\Lambda$ is  constant.
\end{itemize}
\end{theorem}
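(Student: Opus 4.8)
The plan is to construct the feedback generator $\V$ by a pointwise selection of a minimizer in the degree-$k$ Hamiltonian, and then to run a telescoping argument along $\del$-scaled sampling trajectories in which the Lie-bracket approximation formula of Lemma \ref{L_asym} converts the HJ inequality \eqref{dissipative0} into a strict per-step decrease of $U$ together with a matching bound on the per-step cost. First I would upgrade the dissipative inequality from proximal to limiting subgradients: since $f_1,\dots,f_m\in C^{k-1,1}$, every bracket $B(\mathbf g)$ of degree $\le k$ is continuous, so $(x,p)\mapsto H[p_0]^{(k)}(x,p,U(x))$ is continuous, and passing to the limit in \eqref{dissipative0} along $x_i\to x$, $p_i\in\partial_{\mathbf P}U(x_i)$, $p_i\to p$, I obtain $H[p_0]^{(k)}(x,p,U(x))\le-\gamma(U(x))$ for every $p\in\partial U(x)$. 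As $U$ is locally semiconcave, $\partial U(x)\neq\emptyset$ everywhere; at each $x$ I fix one $p_x\in\partial U(x)$ and let $\V(x):=(B_x,\mathbf g_x,\sgn_x)$ attain the (finite) minimum in Definition \ref{kHam}, so that
\[
\langle p_x,\ \sgn_x\,B_x(\mathbf g_x)(x)\rangle+p_0(U(x))\max_{a\in A(\V(x))}l(x,a)\le-\gamma(U(x)).
\]

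The technical core is the one-step estimate. On a sampling interval $[s_{j-1},s_j]$, write $x_j:=y_x^\pi(s_{j-1})$, $\ell_j:=\ell(x_j)$, $\mathfrak s_j:=\mathfrak s(x_j)$, $t_j:=s_j-s_{j-1}$. Lemma \ref{L_asym} gives $y_x^\pi(s_j)=x_j+\sgn_{x_j}B_{x_j}(\mathbf g_{x_j})(x_j)(t_j/\mathfrak s_j)^{\ell_j}+E_j$ with $|E_j|\le\w\,t_j(t_j/\mathfrak s_j)^{\ell_j}$. Feeding this displacement into the semiconcavity estimate \eqref{semiconcave_est} (or, near $\T$, into the $\nu$-semiconcavity \eqref{scv1}) at $p_{x_j}$, and using the selected inequality, I would obtain, for all sufficiently small multiranks,
\[
U(y_x^\pi(s_j))-U(x_j)\le-\tfrac12\Big[\gamma(U(x_j))+p_0(U(x_j))\max_{a\in A(\V(x_j))}l(x_j,a)\Big]\Big(\tfrac{t_j}{\mathfrak s_j}\Big)^{\ell_j}<0,
\]
the factor $\tfrac12$ absorbing the error $\langle p_{x_j},E_j\rangle+\eta|y_x^\pi(s_j)-x_j|^2$, which is $o((t_j/\mathfrak s_j)^{\ell_j})$ as $t_j\to0$. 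In parallel, since $\alpha_x^\pi$ takes values only in $A(\V(x_j))$ on this interval, Lipschitz continuity of $l$ yields $\int_{s_{j-1}}^{s_j}l\,d\sigma\le t_j\max_{a\in A(\V(x_j))}l(x_j,a)+o(t_j)$.

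I would then close the argument by summation. Fixing $0<r<R$, every trajectory with $\d(x)\le R$ stays in a sublevel set of $U$ (compact, as $U$ is proper) and, until the hitting time, in $\{U\ge\varphi(r)\}$ (disjoint from $\T$, as $U$ is positive definite); on that compact region the constants $\w,\bar\delta,\eta$ are uniform, and choosing $\del(R,r)$ small enough makes the per-step inequalities hold throughout the descent. The strict decrease of $U$ gives overshoot boundedness and, thanks to the lower bound $t_j\ge\Delta(k)\delta_{\ell_j}$ in \eqref{vconsistent_part} (which prevents the per-step decrease from vanishing even when $\ell_j>1$), a finite hitting time $\mathbf t(y_x^\pi,r)\le\mathbf T(R,r)$ of $\{U\le\varphi(r)\}$ together with invariance of $\{\d\le r\}$ afterwards; this proves (i). For (ii), dividing the cost bound by the decrease bound gives $\int_{s_{j-1}}^{s_j}l\,d\sigma\le\Theta(\cdot)\,(U(x_j)-U(y_x^\pi(s_j)))$ with $\Theta$ as in \eqref{Theta}; since $\Theta$ is nonincreasing and $U$ strictly decreasing, each term is dominated by $\int_{U(y_x^\pi(s_j))}^{U(x_j)}\Theta(w)\,dw$ (in the rescaled variable $\tfrac{v_2}{v_1}w$ when $k>1$, reflecting the level-dependent choice of $\delta_{\ell_j}$), and the telescoping sum is bounded by $\int_{v_2/2}^{v_1}\Theta(\cdots)\,dw=\Psi(v_1,v_2)$ with $v_1=U(x)$, $v_2=U(y_x^\pi(\mathbf t(y_x^\pi,r)))$, the halved lower limit providing a margin for the last, possibly overshooting, step.

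\emph{The main obstacle} is the behaviour near the target. There the semiconcavity constant degenerates like $\d^{-\nu}$ and, for $k>1$, the displacement scales as $t^{\ell}\ll t$, so the fixed, non-adaptive multirank must be balanced against the shrinking level of $U$; the competing requirements that the Lie error $\w\,t_j$ and the semiconcavity error $\d^{-\nu}(t_j/\mathfrak s_j)^{\ell_j}$ each stay dominated by $\gamma(U(x_j))$ are exactly what force the four-fold maximum in $\Theta$ \eqref{Theta}, while the integrability hypothesis {\bf (H3)} guarantees that $\int_0^{v_1}\Theta$ converges, keeping the cost bound finite as $r\to0$ and yielding the $\mathbf W(x)=\Psi(U(x),0)$ of Remark \ref{R_imp}. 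Finally, under {\bf (H2)$^*$} the constants $\w,\bar\delta$ become independent of the region by Remark \ref{R_as_f} and the semiconcavity is uniform, so no $R$-dependent prefactor is needed and $\Lambda$ may be taken constant.
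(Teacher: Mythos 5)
Your overall architecture coincides with the paper's proof: the upgrade of \eqref{dissipative0} from proximal to limiting subgradients, the argmin selection defining the degree-$k$ $U$-feedback generator (Definition \ref{feedback_mrf}), the one-step descent obtained by feeding the asymptotic formula of Lemma \ref{L_asym} into the semiconcavity (resp. $\nu$-semiconcavity) inequality (this is Lemma \ref{lemma_potdecrescentecosto}, which the paper states with the running cost folded into the same inequality rather than split off as you do, an inessential difference), the summation exploiting the lower bound in \eqref{vconsistent_part} to get a finite hitting time (Lemma \ref{lemma_hatn}), and the cost estimate obtained by comparing per-step cost with per-step decrease of $U$, which after the calibration of the multirank yields the bound $\int_{s_{j-1}}^{s_j} l\,ds \le \frac{\mathfrak{s}_j^{\ell_j}}{t_j^{\ell_j-1}}\,\frac{u_j-u_{j+1}}{p_0(u_j)}$, the function $\Theta$, the rescaled variable $\frac{v_2}{v_1}w$, and the telescoping integral $\Psi$; your reading of the role of {\bf (H3)}, of the halved lower limit $\frac{v_2}{2}$, and of {\bf (H2)$^*$} for the constancy of $\Lambda$ is also faithful to the paper.

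There is, however, one step that does not follow as written: the assertion that the strict decrease of $U$ and the lower bound $t_j\ge \Delta(k)\delta_{\ell_j}$ give, besides the hitting-time bound, ``invariance of $\{\d\le r\}$ afterwards''. Your descent inequality is established only at base points $x_j$ with $U(x_j)\ge\varphi(r)$ (your own constants, like the paper's $\delta_\ell(r)$, degenerate as the level tends to $0$), so once the trajectory dips below the level $\varphi(r)$ nothing in your plan controls it: during subsequent sampling intervals it can climb back above $\varphi(r)$, and one must show that these excursions cannot carry it outside $\B(\T,r)$. This is exactly why the paper does not take $\varphi(r)= d_{U_+}^{-1}(r)$ but inserts a buffer, setting $\varphi(r)=\hat{\bf u}_r:=\chi^{-1}\big(d_{U_+}^{-1}(r)\big)$ with $\chi(u)=u+2\lambda(u)$ as in \eqref{lambdachi}; it then proves the two excursion estimates of Lemma \ref{incastro_chi} (a single sampling step raises $U$ by at most $\lambda(\hat{\bf u}_r)$, and a step started from a level in $]\hat{\bf u}_r,\hat{\bf u}_r+\lambda(\hat{\bf u}_r)]$ keeps $U$ below $\chi(\hat{\bf u}_r)$, hence the state inside $\B(\T,r)$), and finally runs the alternating case analysis (a)/(b) to propagate the confinement over all later sampling intervals. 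Without this buffer construction, or an equivalent mechanism, condition (iii) of Definition \ref{samplestab_costo} — and hence statement (i) — is not established, even though every other ingredient of your proposal matches the paper's argument.
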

The proof of the theorem will be given in the next section. 
\vsm
 From Thm. \ref{Th_strong}  and \cite[Thm. 3.1]{FMR1}, 
the existence of a degree-$k$ MRF $U$ as above implies GAC with ${\bf W}$-regulated cost.  More precisely, we have:


\begin{corollary}\label{Cor_W} Assume hypotheses {\bf (H1)}-{\bf (H2)}. Then,  given a degree-$k$  MRF $U$ satisfying the hypotheses of Thm.  \ref{Th_strong},  system  \eqref{control_sys} is  globally asymptotically controllable to $\T$ with {\bf W}-regulated cost,  the map ${\bf W}$ being defined as 
\bel{W_Th}
\ds{\bf W }(x) = \left\{\begin{array}{l}
\ds\int_{0}^{U(x)}\Theta(w) \,dw
\ \ \   \ \qquad\qquad\qquad\qquad\text{if $k=1$,} \\[1.8ex]
\ds \Lambda(\varphi^{-1}(U(x)))\,\int_{0}^{\frac{U(x)}{2}}\Theta(w)\,dw \ \quad\quad\quad\text{if $k>1$,}
 \end{array}\right.
\eeq
where $\Theta$, $\Lambda$ are as in Thm.  \ref{Th_strong}, and $\varphi$ is as in Def. \ref{samplestab_costo}.  
\end{corollary}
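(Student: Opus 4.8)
The plan is to obtain Corollary~\ref{Cor_W} by combining Theorem~\ref{Th_strong} with the general implication recorded in Remark~\ref{R_imp}(i) and established in \cite[Thm.~3.1]{FMR1}. First I would apply Theorem~\ref{Th_strong}: since $U$ is a degree-$k$ MRF fulfilling the stated hypotheses (in particular {\bf (H3)} for part~(ii)), there is a degree-$k$ feedback generator $\V$ that degree-$k$ $U$-sample stabilizes \eqref{control_sys} to $\T$ with ${\bf\Psi}$-regulated cost, where ${\bf\Psi}(R,v_1,v_2)=\Lambda(R)\,\Psi(v_1,v_2)$ and $\Psi$ is given by \eqref{Psi_Th}. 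Next I would invoke the cited implication, by which degree-$k$ sample stabilizability with regulated cost yields GAC with ${\bf W}$-regulated cost in the sense of Def.~\ref{GAC_costo} for a suitable ${\bf W}$. The whole content then reduces to identifying ${\bf W}$ as in \eqref{W_Th}.

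To produce the open-loop admissible triple demanded by Def.~\ref{GAC_costo}, I would realise it as a limit, as $r\downarrow 0$, of $\del(R,r)$-scaled $\V$-sampling trajectories, concatenated across successively smaller sublevel sets of $U$. Fixing $x$ and a strictly decreasing sequence of levels $(u_i)$ as in Def.~\ref{ib}(iii) with $u_0=U(x)$ and $u_i\downarrow0$, I would drive the state from $\{U=u_i\}$ down to $\{U=u_{i+1}\}$ by the sample-stabilizing feedback; property~(iv) of Def.~\ref{samplestab_costo} then bounds the cost of the $i$-th stage by ${\bf\Psi}(R_i,u_i,u_{i+1})=\Lambda(R_i)\,\Psi(u_i,u_{i+1})$, where $R_i$ is the overshoot radius of that stage. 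Summing over $i$ and using the summability \eqref{ppsi} gives a finite total cost, so that ${\bf W}(x)=\lim_{r\downarrow0}\mathfrak{I}_x^\pi(\mathbf{t}(y_x^\pi,r))$ is controlled by $\sum_i\Lambda(R_i)\,\Psi(u_i,u_{i+1})$.

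For $k=1$ this collapses to the elementary limit already noted in Remark~\ref{R_imp}(iii): there $\Lambda\equiv1$ and $U(y_x^\pi(\mathbf{t}(y_x^\pi,r)))\le\varphi(r)\to0$, so continuity of $\Psi$ gives $\mathfrak{I}_x^\pi(\mathbf{t})\le\Psi(U(x),\varphi(r))\to\Psi(U(x),0)=\int_0^{U(x)}\Theta(w)\,dw$, which is exactly \eqref{W_Th} for $k=1$. For $k>1$ the single limit $r\downarrow0$ fails, because the self-similar integrand $\Theta(\tfrac{v_2}{v_1}w)$ in \eqref{Psi_Th} blows up as $v_2/v_1\to0$; this is what forces the telescoping over the levels $(u_i)$. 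I would control the prefactor by noting that $\Lambda$ is increasing while the overshoot radii decrease along the concatenation, so $\Lambda(R_i)\le\Lambda(R_0)$ with $R_0=\varphi^{-1}(U(x))$: indeed the attractiveness property of Def.~\ref{samplestab_costo} yields $\mathbf{d}(x)\le\varphi^{-1}(U(x))$, making $R_0=\varphi^{-1}(U(x))$ an admissible overshoot radius for $x$. It then remains to estimate $\sum_i\Psi(u_i,u_{i+1})$ by $\int_0^{U(x)/2}\Theta(w)\,dw$, which produces the factor $\Lambda(\varphi^{-1}(U(x)))\int_0^{U(x)/2}\Theta(w)\,dw$ of \eqref{W_Th}.

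I expect the main obstacle to be this last estimate when $k>1$: choosing the level sequence $(u_i)$ and keeping the overshoot radii uniformly bounded so that, after factoring out $\Lambda(\varphi^{-1}(U(x)))$, the telescoping series $\sum_i\Psi(u_i,u_{i+1})$ is dominated by the clean integral $\int_0^{U(x)/2}\Theta(w)\,dw$ rather than by a merely finite quantity. Everything else is routine bookkeeping inherited from \cite[Thm.~3.1]{FMR1} and from the explicit form \eqref{Psi_Th} of $\Psi$.
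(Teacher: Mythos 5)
Your proposal is correct and takes essentially the same route as the paper: the paper's entire proof of Corollary \ref{Cor_W} is precisely your first paragraph, i.e.\ apply Theorem \ref{Th_strong} to obtain degree-$k$ $U$-sample stabilizability with ${\bf \Psi}$-regulated cost and then invoke \cite[Thm.\ 3.1]{FMR1} (the implication recorded in Remark \ref{R_imp}\,(i)) to pass to GAC with ${\bf W}$-regulated cost, ${\bf W}$ as in \eqref{W_Th}. The level-by-level concatenation you sketch afterwards --- including the obstacle you rightly flag for $k>1$, that the telescoping sum $\sum_i \Psi(u_i,u_{i+1})$ must be dominated by the clean integral $\int_0^{U(x)/2}\Theta(w)\,dw$ and not merely by a finite constant multiple of it --- is internal to the cited theorem of \cite{FMR1}, which the paper treats as a black box and does not reprove.
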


 Theorem \ref{Th_strong} and Corollary \ref{Cor_W}  include and extend several previous results on sufficient conditions for  sample stabilizability and GAC with (or without) a regulated cost, as we illustrate in the following remarks.
\begin{remark}[Case $k=1$]\label{Rconfronto1}  If
the cost multiplier $p_0$ is a  positive constant and $k=1$, then the function $\Theta$ as in hypothesis {\bf (H3)} is trivially integrable and  the integral-cost-bound function ${\bf\Psi}$ takes the form
$${\bf\Psi}(R, v_1,v_2)=\Psi(v_1,v_2)=0  \,\, \vee\,\, \frac{1}{p_0}\Big(v_1-\frac{v_2}{2}\Big)\qquad\text{for all \ $(R,v_1,v_2)\in\R_{>0}^3$.}$$
Actually, for $k=1$
 the proof of Theorem \ref{Th_strong} below can be easily
adapted (see Rem. \ref{multirank} and \cite{Fu,FMR1}) to a general control system 
$$
\dot y=F(y,a), \qquad a\in A\subset\R^m,
$$  
with $A$ nonempty and compact and    $F$ continuous in both variables and locally Lipschitz continuous in $x$, uniformly w.r.t. the control.  Hence,  in view of  Rem. \ref{R_imp},\,(iii),    from  Thm. \ref{Th_strong} and Cor.  \ref{Cor_W} we regain the results on sample stabilizability with regulated cost and on GAC with regulated cost obtained  in \cite{LM} and \cite{MR}, respectively  (actually, with a slightly  sharper bound ong the cost). 
\end{remark}

 \begin{remark}[Case $k>1$]\label{Rconfrontok}  
 Point (i) of  Thm. \ref{Th_strong} (which does not concern a cost),  coincides with the result on (an apparently different notion of)  degree-$k$ sample stabilizability in  \cite{Fu}, as the two definitions are  equivalent (see Rem. \ref{R_imp},\,(ii)).  Furthermore,  Cor. \ref{Cor_W} implies the result  in  \cite{MR2}, where the existence of a
 locally semiconcave degree-$k$ CLF  was shown to guarantee GAC.  Finally,  in the general case with a cost,  Cor. \ref{Cor_W} also implies  the result  in   \cite{MR3}, where a sketch is given of the fact that (under slightly  stronger assumptions than those assumed here) the existence of a degree-$k$  MRF  yields GAC with regulated cost.  \end{remark}

\begin{remark}[Degree-$k$ MRFs and STLC]\label{RTmin}   
Allowing $p_0$  to be an increasing function of $u$, on the one hand, significantly improves the estimate on the cost bound function ${\bf W}$.   On the the hand, it allows us to  reformulate well-known Lie algebraic conditions for the small time local controllability (STLC)  of  system \eqref{control_sys} to  $\T$,  requiring the distance function to be a degree-$k$ MRF.  More specifically, let us consider the case $l\equiv1$, that is, the minimum time problem, and suppose that  
 the distance function $\d$ is a degree-$k$  MRF  for some $p_0$, $\gamma$, and  $k\ge1$, for which {\bf (H3)} is valid. Then,  for all $x\in\R^n\setminus\T$,  $\d$  satisfies the  HJ dissipative  inequality  \eqref{dissipative0}, which   takes now the form
\bel{Tdiss}
\min_{(B, \mathbf{g},\sgn) \in \F^{(k)}}  \, \langle p,\sgn\, B(\mathbf{g})(x) \rangle 
	\leq -\tilde\gamma(\d(x))\qquad \forall p\in\partial_{\mathbf{P}} \d(x),  
\eeq
where $\tilde\gamma(r):=p_0(r)+\gamma(r)$.  It is easy to recognize that this condition, combined with the integrability  assumption in hypothesis {\bf (H3)}, leads back to well-known  higher order weak Petrov (i.e. Lie algebraic)  conditions,  sufficient for the STLC of  the driftless control-affine system \eqref{control_sys}  to the closed target $\T$. By the expression ``weak", we mean    dissipative inequalities as \eqref{Tdiss},  in which the dissipative  rate $\tilde\gamma$ can be 0 at 0, to distinguish them from the classical higher order Petrov conditions, in which $\tilde\gamma$ can be replaced by a positive constant.  In particular,  for any $R>0$ by \eqref{W_Th} we get the following estimate for the minimum time function $T$: 
 $$
T(x)\le \left\{\begin{array}{l}
\ds\int_{0}^{\d(x)}\Theta(w) \,dw
\ \ \   \ \qquad\qquad\text{if $k=1$,} \\[3ex]
\ds \bar\Lambda \,\int_{0}^{\frac{\d(x)}{2}}\Theta(w)\,dw \ \qquad\quad\quad\text{if $k>1$}
 \end{array}\right.  $$
for every $x\in B(\T,R)$, for a suitable constant $ \bar{\bf \Lambda}>0$. In view of the definition \eqref{Theta} of $\Theta$,\footnote{For $U$ considered  as a degree-$k$  MRF with cost multiplier $\tilde p_0$ and  dissipative rate $\tilde\gamma$,  as in Rem. \ref{RInt-a}.} this result is entirely in line with well-known one (see e.g.  \cite{CS,BCD,Ma,MaRi,Kras,KrasQ}, and references therein).  In conclusion, our degree-$k$ sample stabilizability sufficient conditions include as a special case most of the sufficient conditions for STLC of system \eqref{control_sys} to an arbitrary closed set  $\T$ in the literature. We point out that, considering only $p_0\equiv \bar p_0$ positive constant, we would have $\tilde \gamma\ge \bar p_0>0$, so our conditions would include just  ordinary, i.e. non-weak,  higher order Petrov conditions.
\end{remark}

\section{Proof of Theorem  \ref{Th_strong}} \label{proof_section}
Let us begin by proving statement (ii) of the thesis, in case $k>1$.  Let $U: \overline{\R^n \setminus \T} \to \R$ be a degree-$k$ MRF  for some  cost multiplier function $p_0$ and some dissipative rate $\gamma$. Furthermore, assume that 
 $U$ is locally semiconcave on $\R^n\setminus\T$ and satisfies hypothesis {\bf (H3)}, the latter meaning that  $U$  is $\nu$-semiconcave  on  $B(\T,c)\setminus\T$  for some  $\nu \in[0,1]$ and $c>0$,  and that $\Theta$ defined as in \eqref{Theta}  is integrable on $[0,u]$ for every $u>0$.   
  
 \subsection{ A degree-$k$ feedback generator and some preliminary estimates}
%
Let us first establish how the function $U$ is used to build a degree-$k$ feedback generator.  
 Note that  in the HJ dissipative inequality \eqref{dissipative0} we can replace the proximal subdifferential  with the  limiting subdifferential, namely  $U$ satisfies  
\bel{dissipative0lim}
   H[p_0]^{(k)}(x,p, U(x))  \leq -\gamma(U(x))\qquad \forall x\in \R^n\setminus\T, \ \ \forall p\in \partial U(x),
\eeq
   since $U$ is  locally semiconcave, thus  locally Lipschitz continuous, and  $H[p_0]^{(k)}(\cdot)$ is continuous.    
 \begin{definition}[Degree-$k$ $U$-feedback generator] \label{feedback_mrf}
Given $U$ as specified above, choose an arbitrary selection $p(x)\in\partial U(x)$ for any $x\in\R^n\setminus\T$. Then, a degree-$k$ feedback generator $\V:\R^n\setminus\T\to \F^{(k)}$
 is said to be a {\em degree-$k$   $U$-feedback generator}  if 
		\bel{k_feedback}
			{\mathcal H} \Big(x,p_0(U(x)), p(x),\V(x)\Big)\leq -\gamma(U(x)) \quad \text{for all $x\in\R^n\setminus\T$.} 
		\eeq	
	\end{definition}
\noindent Clearly, given a selection $p(x)\in\partial U(x)$,   a  degree-$k$   $U$-feedback generator $\V$ always exists and is defined as a selection 
		$$\V(x)\in  \underset{(B, \mathbf{g},\sgn)\in \F^{(k)}}{\operatorname{argmin}} \,\,{\mathcal H} \Big(x,p_0(U(x)), p(x),(B, \mathbf{g},\sgn)\Big)\qquad \forall x\in \R^n\setminus\T.$$ 

 \begin{remark}\label{Rselection}  Let us point out that, in order to define a degree-$k$   $U$-feedback generator $\V$ as above  (and hence, to prove the thesis of Thm.  \ref{Th_strong}) it is enough to suppose the existence of a function $U$ enjoying all the properties above, except that it satisfies the HJ dissipative   inequality in  \eqref{dissipative0lim} just for an arbitrary selection $p(x)\in\partial U(x)$.
 \end{remark}
 
From now on,  let  a selection  $p(x)\in\partial  U(x)$ and  an associated degree-$k$ $U$-feedback generator $\V(x)=(B_x,\mathbf{g}_x,\sgn_x)$ be given. 
\vsmm 
\noindent Let us define  two $U$-dependent distance-like functions $d_{U_-}$, $d_{U_+}:\R_{\ge0} \to \R_{\ge0}$ as  the smallest distance of the target from the superlevels $\{{\rm U}\geq u\}$ and the largest distance of the target from the sublevels $\{{\rm U}\leq u\}$, respectively. Namely,  for every  $ u\geq 0 $ we set
\bel{zeta}	
\begin{aligned}
	&{d_{U_-}}(u):= \inf \left\{ {\bf d}(x) \text{ : } x\in \overline{\R^n\setminus\T} \ \ {\rm with} \ \ {\rm U}(x) \geq u \right\}	\\ 	
	&{d_{U_+}}(u):= \sup\left\{ {\bf d}(x) \text{ : } x\in \overline{\R^n\setminus\T} \ \ {\rm with} \ \ {\rm U}(x) \leq u \right\} .	
\end{aligned}
\eeq
It is immediate to see that  ${d_{U_-}} $, ${d_{U_+}} $ are strictly increasing   and satisfy
\bel{Lzeta}
\begin{split}
&{d_{U_-}}(0)=\lim_{u\to0^+}{d_{U_-}}(u)=\lim_{u\to0^+}{d_{U_+}}(u)=0={d_{U_+}}(0),\\
&{d_{U_-}}({\rm U}(x))\le \d(x)\le   {d_{U_+}}({\rm U}(x)) \qquad \forall x\in \overline{\R^n\setminus\T}.
\end{split}
\eeq 
Furthermore,    $d_{U_-}$, $d_{U_+}$ can be approximated from below and from above, respectively, by continuous, strictly increasing functions 
 that satisfy \eqref{Lzeta}. In the following, we will still use $d_{U_-}$, $d_{U_+}$ to denote such continuous approximations.  Fix $r$, $R>0$ such that $r<R$ and set
\bel{Nhatn}
\hat{\bf{U}}_R:= {d_{U_-}^{\,\,\,-1}}(R),  \qquad\qquad\tilde R:=d_{U_+}(\hat{\bf{U}}_R) = d_{U_+}\circ {d_{U_-}^{\,\,\,-1}}(R) ,
\eeq
so that
$B(\T, R )\subseteq U^{-1}([0, \hat{\bf U}_R])\subseteq B(\T, \tilde R ).$  Now, by applying Lemma \ref{L_asym}
  for this $\tilde R$,  
  we obtain that there exist some $\bar\delta=\bar\delta(\tilde R)$ and $\omega=\omega(\tilde R)>0$ such that, for any $x\in U^{-1}(]0,\hat{\bf{U}}_R])$ and   $t\in[0,\bar\delta]$, each $\V$-multiflow $y_{x,t}$ is defined on $[0,t]$ and satisfies \eqref{asymptotic_formula-}, i.e.
  \bel{asymptotic_formula-dim}
y_{x,t}([0,t])\subset \B(\T,2\tilde R), \quad
 \left| y_{x,t}(t) - x - \sgn_x\, B_x( \mathbf{g}_x)(x) \left( \frac{t}{\mathfrak{s}} \right)^\ell  \right| \leq \w\,  t \left(\frac{t}{\mathfrak{s}}\right)^\ell,
\eeq 
where 
 $\ell=\ell(x)=\ell(B_x)$,  $\mathfrak{s}=\mathfrak{s}(x) = \mathfrak{s}(B_x)$,    as in  \eqref{les}.
%

\noindent 
From the $\nu$-semiconcavity of $U$ on $B(\T,c)\setminus\T$ and the local semiconcavity of $U$ on $\R^n\setminus\T$ (which implies local Lipschitz continuity),  it  follows that, for the same $\tilde R$ as above, there exist  $\bar C=\bar C(\tilde R)>0$ 
and $L_U=L_U(\tilde R)>0$ such that, 
for every $x$, $\hat x\in  \B(\T,2\tilde R)\setminus\T$ with  ${\rm sgm}(\hat x,x)\subset \B(\T,2\tilde R)\setminus\T$ and $|\hat x-x|\le \theta$  ($\theta$ as in Def. \ref{nu_prop}), one has, for every  $ p\in \partial U(x)$ (see  \eqref{semiconcave_est} and \eqref{scv1})

	\bel{scv1_dim}
	\begin{array}{l}
	U(\hat x)-U(x)\leq \langle p, \hat x -x\rangle+ \bar C\left(1\vee \frac{1}{\d({\rm sgm}(\hat x,x))^\nu}\right) \, |\hat x-x|^2, \\[1.0ex]
	|p|\le L_U.
	\end{array}
	\eeq
 Finally,   under hypothesis {\bf (H2)}  there are some   $M=M(\tilde R)>0$ and $L_l=L_l(\tilde R)>0$, such that, for all $\hat x$, $x\in  \B(\T,2\tilde R)$,  one has
\bel{cost_dim}
|B({\bf g})(x)| \le M \quad \forall (B,{\bf g}, \sgn)\in\F^{(k)},  \quad  |l(\hat x,a)-l(x,a)|\le L_l|\hat x-x| \quad \forall a\in A.
\eeq
For brevity, we  often omit to explicitly write the dependence of the constants $\bar\delta$, $\w$, $L_U$, $M$,  and $L_l$ (and of the constants derived from them)  on $\tilde R$.
\vsm
\noindent
\subsection{Estimating $U$ increments when $\V(x)$ has degree $\ell\le k$}  
Define
\bel{Nhatn}
\hat{\bf{u}}_r:=\chi^{-1}\big({d_{U_+}^{\,\,\,-1}}(r)\big),
\eeq
where the  map $\chi$ is defined by setting, 
 for every $ u\geq 0$,
\bel{lambdachi}
\lambda(u):= u \,\, \vee \,\, u^{\frac{1}{k}}, \qquad 	\quad \chi(u) := u + 2 \lambda(u).
\eeq
 Notice that both  $\lambda$ and  $\chi:\R_{\geq 0}\to\R_{\geq 0}$ are continuous, strictly increasing,  surjective, 
  $\chi(0)=\lambda(0)= 0$, and $\chi(u) > u $ for all $u>0$. 
By construction, we immediately get 
$$
\T \subset U^{-1}([0, \hat{\bf u}_r]) \subseteq B(\T, r )\subset B(\T, R )\subseteq U^{-1}([0, \hat{\bf U}_R])\subseteq B(\T, \tilde R )\subset B(\T, 2\tilde R) .
$$
As a consequence of  Lemma \ref{lemma_potdecrescentecosto} below, the  degree-$k$   MRF  $U$ is decreasing when evaluated along  any multiflow $y_{x,t}$ of the  degree-$k$  $U$-feedback generator $\V$, with $x\in U^{-1}([\hat{\bf u}_r, \hat{\bf U}_R])$ and  $t$ in an interval which depends on the degree  $\ell=\ell(x)$ defined as above. 

\begin{lemma} \label{lemma_potdecrescentecosto} 
 Fix an integer  $\ell\in\{1,\dots,k\}$. Then, using the above notations, we obtain that there exists some positive $\delta_\ell(r)$ ($=\delta_\ell(\tilde R,r)$, i.e. depending on $\tilde R$ as well)
  such that, for any  $x\in U^{-1}([\hat{\bf u}_r, \hat{\bf U}_R])$ verifying  $\ell(x)=\ell$, and any  $t\in[0,\delta_\ell(r)]$, the $\V$-multiflow $y_{x,t}$ verifies
	\bel{decrescenza_costo}
	U(y_{x,t}(t)) - U(x) +\frac{t^{\ell-1}}{\mathfrak{s}^{\ell}} p_0(U(x)) \int_0^t l (y_{x,t}(s), \alpha_{x,t}(s))\,ds \,  \leq -\frac{\gamma(U(x))}{2} \frac{t^{\ell}}{\mathfrak{s}^{\ell}},
	\eeq
 $$\d(y_{x,t}(s))\le 2\tilde R \qquad \forall s\in[0,t],\,\footnote{Incidentally, note that there might exist $s<t$ such that  $y_{x,t}(s)\in\T$.}$$ 
and  \bel{sopra}\ds
\frac{\hat{\bf u}_r}{2}\le  U(y_{x,t}(t))< U(x).
\eeq 
\end{lemma}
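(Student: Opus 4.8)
The plan is to obtain \eqref{decrescenza_costo} by combining three ingredients already at our disposal for $x\in U^{-1}([\hat{\bf u}_r,\hat{\bf U}_R])$ with $\ell(x)=\ell$: the bracket-approximation formula \eqref{asymptotic_formula-dim}, the defining inequality \eqref{k_feedback} of the $U$-feedback generator, and the semiconcavity estimate \eqref{scv1_dim}, pushing all remainder terms below $\tfrac12\gamma(U(x))(t/\mathfrak s)^\ell$ by shrinking $t$. Write $\hat x:=y_{x,t}(t)$, $p:=p(x)\in\partial U(x)$, $A_{B_x}:=A(B_x,\mathbf g_x,\sgn_x)$, and set the shorthand $\kappa:=(t/\mathfrak s)^\ell$, so that \eqref{asymptotic_formula-dim} reads $\hat x-x=\sgn_x\,B_x(\mathbf g_x)(x)\,\kappa+E$ with $|E|\le \w t\,\kappa$, hence $|\hat x-x|\le(M+\w t)\kappa$ by \eqref{cost_dim}. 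Since $U(x)\ge\hat{\bf u}_r$ forces $\d(x)\ge d_{U_-}(\hat{\bf u}_r)>0$, for small $t$ the segment ${\rm sgm}(\hat x,x)$ lies in $\B(\T,2\tilde R)\setminus\T$ and $D:=\d({\rm sgm}(\hat x,x))\ge d_{U_-}(\hat{\bf u}_r)-|\hat x-x|$ stays bounded away from $0$, so $D^{-\nu}$ is uniformly bounded on the relevant set.

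First I would estimate the $U$-increment. Inserting $\hat x-x$ into \eqref{scv1_dim} and then using \eqref{k_feedback} in the form $\langle p,\sgn_x B_x(\mathbf g_x)(x)\rangle\le -\gamma(U(x))-p_0(U(x))\max_{a\in A_{B_x}}l(x,a)$, together with $|\langle p,E\rangle|\le L_U\w t\kappa$, would give
\[
U(\hat x)-U(x)\le\Big(-\gamma(U(x))-p_0(U(x))\max_{a\in A_{B_x}}l(x,a)\Big)\kappa+L_U\w t\,\kappa+\bar C\,(1\vee D^{-\nu})\,(M+\w t)^2\kappa^2.
\]
Next I would bound the running cost. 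By Definition \ref{orcon} the control $\alpha_{x,t}(s)$ takes values in $A_{B_x}$, so by \eqref{cost_dim} and the speed bound $|\dot y_{x,t}|\le M$ (valid since $y_{x,t}$ stays in $\B(\T,2\tilde R)$ by the containment half of \eqref{asymptotic_formula-dim}) one has $l(y_{x,t}(s),\alpha_{x,t}(s))\le\max_{a\in A_{B_x}}l(x,a)+L_lMs$, whence $\int_0^t l\,ds\le t\max_{a\in A_{B_x}}l(x,a)+\tfrac12 L_lMt^2$. Multiplying by $\tfrac{t^{\ell-1}}{\mathfrak s^\ell}p_0(U(x))$ and using $\tfrac{t^{\ell-1}}{\mathfrak s^\ell}\cdot t=\kappa$, the leading term is exactly $\kappa\,p_0(U(x))\max_{a\in A_{B_x}}l(x,a)$.

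The decisive point is the \emph{cancellation}: adding the cost bound to the $U$-increment bound, the two occurrences of $\pm\kappa\,p_0(U(x))\max_{a\in A_{B_x}}l(x,a)$ annihilate, leaving (after using $p_0\le1$)
\[
U(\hat x)-U(x)+\frac{t^{\ell-1}}{\mathfrak s^\ell}p_0(U(x))\int_0^t l(y_{x,t}(s),\alpha_{x,t}(s))\,ds\le-\gamma(U(x))\,\kappa+\Big(L_U\w t+\tfrac12 L_lMt+\bar C(1\vee D^{-\nu})(M+\w t)^2\kappa\Big)\kappa.
\]
Because $\gamma(U(x))\ge\gamma(\hat{\bf u}_r)>0$, $\mathfrak s\in[1,\beta(k)]$ gives $\kappa\le t^\ell\le t$, $D^{-\nu}$ is uniformly bounded, and $L_U,M,\w,\bar C,L_l$ are uniform on $\B(\T,2\tilde R)$, the bracketed remainder tends to $0$ as $t\to0^+$ uniformly over the stated $x$; choosing $\delta_\ell(r)\in(0,\bar\delta]$ so that it is $\le\tfrac12\gamma(U(x))$ yields \eqref{decrescenza_costo}. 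The containment $\d(y_{x,t}(s))\le2\tilde R$ is then just the first half of \eqref{asymptotic_formula-dim}; the strict inequality $U(\hat x)<U(x)$ (for $t>0$) follows from \eqref{decrescenza_costo} since its cost term is $\ge0$ while its right-hand side is $<0$; and the lower bound $U(\hat x)\ge\hat{\bf u}_r/2$ follows from the local Lipschitz estimate $U(\hat x)\ge U(x)-L_U|\hat x-x|\ge\hat{\bf u}_r-L_U(M+\w t)t$ after further shrinking $\delta_\ell(r)$ so that $L_U(M+\w t)t\le\hat{\bf u}_r/2$. I expect the main obstacle to be precisely engineering this cancellation so as to pin down the exact coefficient $\tfrac{t^{\ell-1}}{\mathfrak s^\ell}$ in \eqref{decrescenza_costo}, and verifying that the semiconcavity remainder carrying the possibly singular factor $D^{-\nu}$ stays controlled—harmless here because $U(x)\ge\hat{\bf u}_r$ keeps $x$ away from $\T$, yet it is exactly this $\nu$-dependence that must be tracked for the later summation over sampling intervals that produces the integrability of $\Theta$.
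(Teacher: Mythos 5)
Your proposal is correct and follows essentially the same route as the paper: the asymptotic formula \eqref{asymptotic_formula-dim} for the displacement, the semiconcavity estimate \eqref{scv1_dim} with the $\d^{-\nu}$ factor controlled because $U(x)\ge\hat{\bf u}_r$ keeps $x$ away from $\T$, the feedback inequality \eqref{k_feedback} absorbing the cost term (your ``cancellation'' is exactly the paper's application of the Hamiltonian bound, just invoked one line earlier), the Lipschitz-in-time bound on the running cost via $\alpha_{x,t}\in A(\V(x))$, and the Lipschitz estimate for the lower bound in \eqref{sopra}. The only differences are cosmetic: you bound $\d({\rm sgm}(\hat x,x))$ from below via $d_{U_-}(\hat{\bf u}_r)$ instead of the paper's $U(x)\le L_U\d(x)$, and you choose $\delta_\ell(r)$ qualitatively where the paper pins it down as the solution of the explicit equation \eqref{eq_delta} (a form it needs only later, in the cost-boundedness step, not for the lemma itself).
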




\begin{proof}  
	 We  begin by proving  property \eqref{sopra}.
	  Let us set
	\bel{tilde_delta} 	\delta_0 :=  1\,\, \wedge \,\, \bar\delta \,\, \wedge \,\,  \frac{\theta}{M},     \qquad 
	\hat\delta_\ell(r):=  \frac{\hat{\bf u}_r^{\frac{1}{\ell}}}{(2 (M+\w) L_{_U})^{\frac{1}{\ell}} \vee L_{_U}M} .
	\eeq 
	Fix $x\in U^{-1}([\hat{\bf u}_r, \hat{\bf U}_R])$ with   degree $\ell(x)=\ell$.  By the first relation in \eqref{asymptotic_formula-dim}, for every $t\in[0,\delta_0]$ one has  $y_{x,t}(s)\in \B(\T,2\tilde R)$ for all $s\in[0,t]$. Hence, in view of  \eqref{cost_dim} and \eqref{tilde_delta},  one has
	$|y_{x,t}(s)-x|\le \theta$ for all $s\in[0,t]$. Furthermore, the definition of $L_U$ and the fact that $U\equiv 0$ on $\T$ imply that $U(x)\le L_U \d(x)$. Therefore, since $\mathfrak{s}\geq 1$,  for all times $t\in[0,\delta_0\land \hat\delta_\ell(r)]$  the second relation in \eqref{asymptotic_formula-dim}  yields 
	\bel{first_est}
	|y_{x,t}(t) -x| \leq (M+\w t) \left( \frac{t}{\mathfrak{s}} \right)^\ell \leq (M+\w) t^{\ell} \leq \frac{\hat{\bf u}_r}{2L_{_U}} \leq \frac{U(x)}{2L_{_U}} \leq \frac{{\bf d}(x)}{2}.
	\eeq
	As a first consequence, one has  $|U(y_{x,t}(t)) - U(x)| \leq \displaystyle \frac{\hat{\bf u}_r}{2}$. Since $U(x) \geq \hat{\bf u}_r$, this  proves the left-hand side of \eqref{sopra}. 
	
	 Now let us prove   \eqref{decrescenza_costo}. Since for any $z\in \B(x,\d(x)/2)$ one has $\d(z)\geq \d(x)/2$, and \eqref{first_est} implies that  ${\rm sgm}(x, y_{x,t}(t)))\subset\B( x,\d(x)/2)$, one has
		\bel{stima_dist}
	\d({\rm sgm}(x, y_{x,t}(t)))\geq 
	\frac{{\bf d}(x)}{2} \geq \frac{\hat{\bf u}_r}{2 L_{_U}}.
	\eeq
Let now $\check\delta_\ell(r)$ be the unique solution of the equation\footnote{The solution's uniqueness  follows from the trivial fact that the function of $\delta$ on the left-hand side of \eqref{eq_delta} is strictly increasing and unbounded.}
	\bel{eq_delta}
	\left( L_{_U}\w + \frac{L_l M}{2} \right) \delta + \frac{\bar C(2L_{_U})^\nu(M+\w)^2}{[(2L_U)\land \hat{\bf u}_r]^\nu} \delta^\ell =\frac{\gamma(\hat{\bf u}_r)}{2},
	\eeq
where ($M$ and) $L_l$ are as in  \eqref{cost_dim},  and set	\bel{delta_h}
	\delta_\ell(r):= \delta_0 \,\,\wedge \,\,\hat\delta_\ell(r)\,\, \wedge\,\,\check\delta_\ell(r).
	\eeq
 Using \eqref{k_feedback}, \eqref{scv1_dim}, \eqref{stima_dist}, \eqref{asymptotic_formula-dim}, for any $t\in[0, \delta_\ell(r)]$,  we get\,\footnote{In view of hypothesis {\bf (H2)}, for any  subset $A'\subseteq A$ the function $x\mapsto  \max_{a\in A'} l(x,a)$ is $L_l$-Lipschitz continuous on $\B(\T,2\tilde R)\setminus\T$. Recall also that $p_0(\cdot)$ and  $t$  are $\le 1$.} 

	\[
	\begin{array}{l}
	\ds	U(y_{x,t}(t)) - U(x)  + \left(\frac{t^{{\ell}-1}}{\mathfrak{s}^{\ell}}\right)p_0(U(x)) \int_0^t l(y_{x,t}(s), \alpha_{x,t}(s))\,ds \,  \\[2ex]
	\qquad\leq \langle p(x), y_{x,t}(t) -x\rangle  +\bar C\vee\frac{ \bar C}{\d({\rm sgm}(x, y_{x,t}(t)))^\nu}\,|y_{x,t}(t) -x|^2  \\[2ex]
	\ds\qquad\qquad\qquad\qquad+\left(\frac{t^{{\ell}-1}}{\mathfrak{s}^{\ell}}\right)p_0(U(x)) \int_0^t \max_{a\in A(\V(x))} l(y_{x,t}(s),a)ds \, \\[2ex]
	\qquad \ds	\leq  \left( \frac{t}{\mathfrak{s}} \right)^{\ell} \left[ \langle p(x), \sgn_x B_x(\mathbf{g}_x)(x)\rangle + L_{_U} \w t+ p_0(U(x))\max_{a\in A(\V(x)) } l(x,a) \right. \\[2ex]
\ds\quad\qquad\qquad\qquad\qquad\qquad \left. +\frac{p_0(U(x)) L_l M}{2} t  + \frac{\bar C(2L_{_U})^\nu (M+\w t)^2}{[(2L_U)\land \hat{\bf u}_r]^\nu} \left( \frac{t}{\mathfrak{s}} \right)^{\ell}  \right]    \\[2ex]
\ds		\qquad\leq  \left( \frac{t}{\mathfrak{s}} \right)^{\ell} \left[ -\gamma(U(x)) + \left( L_{_U} \w + \frac{L_l M}{2}  \right) t + \frac{\bar C(2L_{_U})^\nu (M+\w)^2 }{[(2L_U)\land \hat{\bf u}_r]^\nu} t^{\ell}  \right]    \\[2ex]
\ds		\qquad\leq \left( \frac{t}{\mathfrak{s}} \right)^{\ell}\left[-\gamma(U(x)) + \frac{\gamma(\hat{\bf u}_r)}{2} \right] \leq -\frac{\gamma(U(x))}{2} \left(\frac{t}{\mathfrak{s}}\right)^{\ell}.
	\end{array}
	\]
\end{proof}

  In the next lemma we  
determine two $U$-sublevels such that the $\V$-multiflows issuing from them remain in $\B(\T,r)$ until a time that depends on the utilized iterated Lie bracket. 
\begin{lemma}\label{incastro_chi}
 Fix an integer  $\ell\in\{1,\dots,k\}$. Then, for $\lambda$ and $\chi$   as in \eqref{lambdachi} and  using the above notations,   for any  $x\in U^{-1}(]0, \hat{\bf U}_R])$ with the degree $\ell(x)$ of $\V(x)$ equal to $\ell$ and any  $t\in[0,\delta_\ell(r)]$, the $\V$-multiflow $y_{x,t}$ satisfies
	\begin{itemize}
		\item[{\rm (i)}] if $y_{x,t}(\Ss)\in U^{-1}(]0,\hat{\bf u}_r])$ for some $\Ss\in[0,t]$, then $U(y_{x,t}(s))\leq \hat{\bf u}_r +  \lambda(\hat{\bf u}_r)$  for any $s\in[\Ss,t]$, so that, in particular,  $\d(y_{x,t}(s))\leq r$  for any $s\in[\Ss,t]$;
		\item[{\rm (ii)}] if $x\in U^{-1}(]\hat{\bf u}_r, \hat{\bf u}_r + \lambda(\hat{\bf u}_r)])$, then $U(y_{x,t}(s))\leq \chi(\hat{\bf u}_r)$ for any $s\in[0,t]$, so that, in particular, $\d(y_{x,t}(s))\leq r$ for any $s\in[0,t]$.\,\footnote{Notice that $\hat{\bf u}_r< \hat{\bf u}_r +  \lambda(\hat{\bf u}_r)< \hat{\bf U}_R$ by definition.} 
	\end{itemize}		

\end{lemma}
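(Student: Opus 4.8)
The plan is to deduce both statements from a single estimate on how much $U$ can increase along the multiflow over a short time, the crucial observation being that at \emph{intermediate} times $s\in[0,t]$ the displacement of $y_{x,t}$ is only of order $t$, rather than of order $t^{\ell}$ as at the endpoint in \eqref{asymptotic_formula-dim}. Indeed, every vector field $f_i$ satisfies $|f_i|\le M$ on $\B(\T,2\tilde R)$ (this is the degree-$1$ case of \eqref{cost_dim}), and the multiflow control takes values in $A$; hence, for $t\le\delta_\ell(r)\le\delta_0\le\bar\delta$, the asymptotic formula \eqref{asymptotic_formula-dim} keeps $y_{x,t}([0,t])$ inside $\B(\T,2\tilde R)$, where $|\dot y_{x,t}|\le M$ almost everywhere, so that $|y_{x,t}(s)-y_{x,t}(s')|\le M(s-s')\le Mt$ for all $0\le s'\le s\le t$.

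Next I would convert this displacement bound into a bound on the growth of $U$. Since $|p|\le L_{_U}$ for every $p\in\partial U$ on $\B(\T,2\tilde R)\setminus\T$ by \eqref{scv1_dim}, and $U$ is continuous up to $\partial\T$ with $U\equiv0$ there, integrating $U$ along the curve $y_{x,t}$ gives
\[
U(y_{x,t}(s))-U(y_{x,t}(s'))\le L_{_U}\,|y_{x,t}(s)-y_{x,t}(s')|\le L_{_U}M\,t .
\]
By the definition \eqref{delta_h} of $\delta_\ell(r)$ we have $t\le\hat\delta_\ell(r)$, and the second term in the denominator of $\hat\delta_\ell(r)$ in \eqref{tilde_delta} yields $L_{_U}M\,\hat\delta_\ell(r)\le \hat{\bf u}_r^{1/\ell}$, whence $L_{_U}M\,t\le \hat{\bf u}_r^{1/\ell}$. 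The key algebraic point, and the reason behind the choice of $\lambda$ in \eqref{lambdachi}, is that $1\le\ell\le k$ forces $\hat{\bf u}_r^{1/\ell}\le\lambda(\hat{\bf u}_r)$: if $\hat{\bf u}_r\le1$ then $\hat{\bf u}_r^{1/\ell}\le\hat{\bf u}_r^{1/k}\le\lambda(\hat{\bf u}_r)$, while if $\hat{\bf u}_r>1$ then $\hat{\bf u}_r^{1/\ell}\le\hat{\bf u}_r\le\lambda(\hat{\bf u}_r)$. Thus the $U$-increment along $y_{x,t}$ over any subinterval is at most $\lambda(\hat{\bf u}_r)$.

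With this uniform increment bound the two conclusions follow immediately. For (i), starting from $U(y_{x,t}(\Ss))\le\hat{\bf u}_r$ gives $U(y_{x,t}(s))\le\hat{\bf u}_r+\lambda(\hat{\bf u}_r)$ for $s\in[\Ss,t]$; for (ii), starting from $U(x)\le\hat{\bf u}_r+\lambda(\hat{\bf u}_r)$ gives $U(y_{x,t}(s))\le\hat{\bf u}_r+2\lambda(\hat{\bf u}_r)=\chi(\hat{\bf u}_r)$ for $s\in[0,t]$. In either case $U(y_{x,t}(s))\le\chi(\hat{\bf u}_r)$, so by the monotonicity of $d_{U_+}$, by \eqref{Lzeta}, and by the defining relation $\chi(\hat{\bf u}_r)=d_{U_+}^{\,\,\,-1}(r)$ from \eqref{Nhatn},
\[
\d(y_{x,t}(s))\le d_{U_+}\big(U(y_{x,t}(s))\big)\le d_{U_+}\big(\chi(\hat{\bf u}_r)\big)=r .
\]
I expect the genuine content to lie entirely in the calibration of $\lambda$, designed precisely so that the crude order-$t$ intermediate displacement is absorbed uniformly across all admissible degrees $\ell\le k$; the only delicate technical point is justifying the curve-integration of $U$ when $y_{x,t}$ may momentarily cross $\T$ (as flagged in the footnote to Lemma \ref{lemma_potdecrescentecosto}), which is handled by the continuity of $U$ up to $\partial\T$ together with $U\equiv0$ on $\T$ and does not alter the bound $L_{_U}M\,t$.
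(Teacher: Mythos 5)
Your proof is correct and follows essentially the same route as the paper's: the uniform bound $|\dot y_{x,t}|\le M$ inside $\B(\T,2\tilde R)$ combined with the subgradient bound $L_{_U}$ gives $|U(y_{x,t}(s))-U(y_{x,t}(s'))|\le L_{_U}Ms\le L_{_U}M\,\hat\delta_\ell(r)\le \hat{\bf u}_r^{1/\ell}\le\lambda(\hat{\bf u}_r)$, after which (i) and (ii) follow from the definitions of $\lambda$, $\chi$, $\hat{\bf u}_r$ and the inequality $\d\le d_{U_+}\circ U$ from \eqref{Lzeta}. Your explicit case analysis ($\hat{\bf u}_r\le 1$ versus $\hat{\bf u}_r>1$) simply spells out what the paper compresses into ``recalling the definition of $\lambda$,'' so there is nothing substantively different.
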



%

\begin{proof}
	The proofs of (i) and (ii) follow the same lines, so we prove (ii) only. Let $x\in U^{-1}(]\hat{\bf u}_r,\hat{\bf u}_r+\lambda(\hat{\bf u}_r)])$. Since $y_{x,t}(s)\in \B(\T,2\tilde R)$ for all $s\in[0,t]$ and, in particular,   $t\le\hat\delta_\ell(r)$ as defined in \eqref{tilde_delta},  recalling the definition of $\lambda$  one has
	\[
	|U(y_{x,t}(s))-U(x)|\leq L_{_U} |y_{x,t}(s)-x| \leq L_{_U} M s \leq \lambda(\hat{\bf u}_r) \qquad \forall s\in[0,t].
	\]
Hence,   $U(y_{x,t}(s))\leq \chi(\hat{\bf u}_r)$ for any $s\in[0,t]$, by  the definition of $\chi$. In view of  \eqref{Lzeta} and  \eqref{Nhatn}, this implies that  $\d(y_{x,t}(s))\leq r$ for any $s\in[0,t]$. 
\end{proof}

\noindent  \subsection{ Stabilizing $\del$-scaled $\V$-sampling processes}  Given $0<r<R$, set 
$$
\del=\del(R,r):=(\delta_1(r), \dots,\delta_k(r)),
$$
where  $\delta_\ell(r)$ is as in \eqref{delta_h} for any $\ell=1,\dots,k$.\footnote{The  multirank $\del$ turns out to depend on $R$ as well, because $\delta_\ell$ do depend on $\tilde R$, which is a function of $R$ by construction, for any $\ell=1\dots,k$.} 
  Let $(x, \pi, \alpha_x^\pi, y_x^\pi)$ be an arbitrary $\del(R,r)$-scaled $\V$-sampling process such that  $\d(x)\leq R$.  Since the partition $\pi=(s_j)_{j\in\N}$ of $\R_{\geq0}$ satisfies  \eqref{vconsistent_part} (and $\B(\T,R)\subseteq U^{-1}(]0,\hat{\bf U}_R])$),  thanks to the definition of $\del$, 
Lemma \ref{lemma_potdecrescentecosto} implies that $(\alpha_x^\pi, y_x^\pi)$ is an admissible  control-trajectory pair from $x$,   and  
\bel{Gamma_pre}
y_x^\pi(s)\in \B(\T,2\tilde R) \qquad\text{ for all $s\ge0$.}
\eeq
 In particular, using the notations of Def. \ref{multiflow} and Def. \ref{k_traj},  for every $j\in\N$,  $1\le j\le \J$,   the $\V$-multiflow  $y_{x_j,t_j}$  is defined on the whole interval $[0,t_j]$, where  
\[
t_j:= s_j-s_{j-1},  \qquad x_1:=x, \qquad  \qquad x_{j+1}:= y_{x_{j},t_{j}}(s_{j}-s_{j-1}).
\]
Hence, one has  $y_x^\pi(s_j)=x_{j+1}$  for all $0\leq j < \J$. (However,  whenever $\J<+\infty$,   the trajectory  $y_x^\pi$ reaches for the first time the target $\T$ at some $\sigma_\J\in]s_{\J-1},s_\J]$.  In this case,  after $\sigma_\J$, the pair $(\alpha_x^\pi, y_x^\pi)$ is extended constantly, while  the $\J$-th $\V$-multiflow remains defined as before, over the entire interval $[s_{\J-1},s_\J]$, so that  we may well have $y_x^\pi(s_\J)=y_x^\pi(\sigma_\J)\ne x_{\J+1}$.)
 \vsmm

In the following lemma we provide an upper bound for the time taken by $y_x^\pi$ to reach the sublevel set $U^{-1}(]0,\hat{\bf u}_r[)$.

\begin{lemma} \label{lemma_hatn} 
Consider $(x, \pi, \alpha_x^\pi, y_x^\pi)$ as above. Then, one has
	\bel{iotar}
	\iota_r :=\inf \{ j \in\N \text{ : } U(y_x^{\pi}(s_j))  < \hat{\bf u}_r \} < +\infty,
	\eeq
 and $\iota_r\leq \J$ when $\J<+\infty$. 
	 Moreover,
	\bel{S}
	s_{\iota_r} \leq {\bf T}(R,r):= \beta(k) \sqrt[k]{\frac{2(\hat{\bf U}_R-\hat{\bf u}_r) J(R,r)^{k-1}}{\gamma(\hat{\bf u}_r)}} +1,
	\eeq
	where  $ \beta(k)$ is as in \eqref{bk} and,  for 
	\bel{muR} 
	\mu(R,r):=\min\{\delta_\ell(r) \text{ : } \ell=1,\dots,k \},
	\eeq
$J(R,r)$ is defined as\footnote{We use $[\![\cdot]\!]$ to denote the integer part.} 
	 
	\bel{J}
	J(R,r):= 
	\begin{cases}
	\left[\!\!\left[ \frac{2 (\hat{\bf U}_R-\hat{\bf u}_r) \beta(k)^k }{\gamma(\hat{\bf u}_r) \mu(R,r)^k \Delta(k)^k}   \right]\!\!\right] +1 \qquad &\text{if $k\geq 2$},  \\
	\qquad\quad 1 &\text{if $k=1$.}
	\end{cases}
	\eeq

\end{lemma}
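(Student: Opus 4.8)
The plan is to track the decrease of $U$ along the discrete sequence of sampling nodes $x_j=y_x^\pi(s_{j-1})$ and then to convert a bound on the accumulated decrease into a bound on the elapsed time $s_{\iota_r}$. First I would record the one-step estimate supplied by Lemma \ref{lemma_potdecrescentecosto}: writing $\ell_j:=\ell(x_j)$ and $\mathfrak{s}_j:=\mathfrak{s}(x_j)$, as long as $x_j\in U^{-1}([\hat{\bf u}_r,\hat{\bf U}_R])$ the multiflow $y_{x_j,t_j}$ is defined on $[0,t_j]$, stays in $\B(\T,2\tilde R)$, and (discarding the nonnegative cost term in \eqref{decrescenza_costo} and using that $\gamma$ is increasing) obeys
$$U(x_{j+1})-U(x_j)\le -\frac{\gamma(\hat{\bf u}_r)}{2}\left(\frac{t_j}{\mathfrak{s}_j}\right)^{\ell_j}.$$
Since $\d(x)\le R$ forces $U(x_1)\le\hat{\bf U}_R$ and the $U$-values are nonincreasing, every node of index $j<\iota_r$ lies in this superlevel; summing over $j=1,\dots,\iota_r-1$ and using $U(x_{\iota_r})\ge\hat{\bf u}_r$, $U(x_1)\le\hat{\bf U}_R$ telescopes to
$$\frac{\gamma(\hat{\bf u}_r)}{2}\sum_{j=1}^{\iota_r-1}\left(\frac{t_j}{\mathfrak{s}_j}\right)^{\ell_j}\le \hat{\bf U}_R-\hat{\bf u}_r.$$

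Next I would establish $\iota_r<+\infty$. If $k\ge2$, the scaling constraint \eqref{vconsistent_part} gives $t_j\ge\Delta(k)\delta_{\ell_j}\ge\Delta(k)\mu(R,r)>0$, while $\mathfrak{s}_j\le\beta(k)$ and $\ell_j\le k$; as the base is $\le 1$, each summand is bounded below by $(\Delta(k)\mu(R,r)/\beta(k))^k$. Were $\iota_r$ infinite the sum above would diverge, a contradiction, and the same counting turns the telescoped bound into $\iota_r-1\le \frac{2(\hat{\bf U}_R-\hat{\bf u}_r)\beta(k)^k}{\gamma(\hat{\bf u}_r)\Delta(k)^k\mu(R,r)^k}$, whence $\iota_r\le J(R,r)$ since $\iota_r$ is an integer. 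For $k=1$ one has $\ell_j\equiv1$, $\mathfrak{s}_j\equiv1$, so the telescoped quantity is $s_{\iota_r-1}=\sum_{j<\iota_r}t_j$; because $s_N\to+\infty$ along the partition, boundedness of these partial sums again forces $\iota_r<+\infty$. Finally, the inequality $\iota_r\le\J$ when $\J<+\infty$ follows by continuity: at $s_\J$ the extended trajectory has already reached $\T$, so $U(y_x^\pi(s_\J))=0<\hat{\bf u}_r$, placing $\J$ in the set defining $\iota_r$.

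It remains to bound $s_{\iota_r}$. Setting $a_j:=t_j/\mathfrak{s}_j\le1$, from $a_j\le1$ and $\ell_j\le k$ I get $a_j\le(a_j^{\ell_j})^{1/k}$, and then H\"older's inequality with exponents $k$ and $k/(k-1)$ applied to the $\iota_r-1$ terms yields
$$\sum_{j=1}^{\iota_r-1}a_j\le(\iota_r-1)^{\frac{k-1}{k}}\left(\sum_{j=1}^{\iota_r-1}a_j^{\ell_j}\right)^{\frac1k}.$$
Combining this with $t_j=\mathfrak{s}_j a_j\le\beta(k)a_j$, the telescoped bound on $\sum a_j^{\ell_j}$, and $\iota_r-1\le J(R,r)$, I obtain
$$s_{\iota_r-1}=\sum_{j=1}^{\iota_r-1}t_j\le\beta(k)\sqrt[k]{\frac{2(\hat{\bf U}_R-\hat{\bf u}_r)J(R,r)^{k-1}}{\gamma(\hat{\bf u}_r)}};$$
adding the final step $t_{\iota_r}\le\delta_0\le1$ produces exactly $s_{\iota_r}\le{\bf T}(R,r)$. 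For $k=1$ the exponent $\frac{k-1}{k}$ vanishes and this reduces to $s_{\iota_r-1}\le 2(\hat{\bf U}_R-\hat{\bf u}_r)/\gamma(\hat{\bf u}_r)$, consistent with $J(R,r)=1$ and $\beta(1)=1$.

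The step I expect to be the most delicate is the passage from the accumulated bracket-scaled decrement $\sum(t_j/\mathfrak{s}_j)^{\ell_j}$ to the genuine elapsed time $\sum t_j$: this is where the degrees $\ell_j$ enter with different exponents, and the H\"older estimate together with the cardinality bound $\iota_r\le J(R,r)$ must be calibrated so as to reproduce the precise constant ${\bf T}(R,r)$. The degenerate value $\Delta(1)=0$ is a further point of care, since the uniform per-step lower bound available for $k\ge2$ disappears for $k=1$ and must be replaced by the divergence of $s_N$.
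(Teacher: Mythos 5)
Your proposal is correct and follows essentially the same route as the paper: telescoping the one-step decrease \eqref{decrescenza_costo}, obtaining finiteness of $\iota_r$ via the per-step lower bound $t_j\ge\Delta(k)\mu(R,r)$ (for $k\ge2$) or divergence of $s_j$ (for $k=1$), deriving the cardinality bound $\iota_r\le J(R,r)$, and converting $\sum (t_j/\mathfrak{s}_j)^{\ell_j}$ into a bound on $s_{\iota_r-1}$ before adding the final step $t_{\iota_r}\le 1$. The only cosmetic difference is that you invoke H\"older's inequality where the paper uses Jensen's inequality applied to $x\mapsto x^k$ — these are the same power-mean estimate — and you reduce exponents $\ell_j\to k$ before rather than after the summation.
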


\begin{proof}
	If $\J<+\infty$, one has by definition that  $U(y_x^{\pi}(s_j))=0$ for every $j\ge\J$. Therefore,   $\iota_r \le \J<+\infty$.   If $\J=+\infty$ and we assume by contradiction that
	$\iota_r=+\infty$, then  
	$U(y_x^{\pi}(s_j)) \geq \hat{\bf u}_r$ for any $j\in\N$. Hence, if we set   $\ell_j:= \ell(y_x^\pi(s_{j-1}))$, and $\mathfrak{s}_j := \mathfrak{s}(y_x^\pi(s_{j-1}))$, by \eqref{decrescenza_costo} and the monotonicity of $\gamma$, for any integer $j\ge1$,  we have
	\bel{passaggi}
	\begin{array}{l}
		\hat{\bf u}_r - \hat{\bf U}_R \leq U(y_x^{\pi}(s_j)) - U(x) = \left[  U(y_x^\pi(s_j)) - U(y_x^\pi(s_{j-1}))\right] + \dots \\
		\ds\qquad\qquad\qquad\qquad\qquad\qquad\qquad\qquad\qquad\qquad\qquad + \left[ U(y_x^\pi(s_1))- U(x)  \right] \ds	\\
		\ds \ \qquad\qquad\leq -\frac{\gamma(U(y_x^\pi(s_{j-1})))}{2}  \left( \frac{t_j}{\mathfrak{s}_j}  \right)^{\ell_j} - \cdots - \frac{\gamma(U(x))}{2}  \left( \frac{t_1}{\mathfrak{s}_1}  \right)^{\ell_1}  \\
		\ds\ \qquad\qquad\leq - \frac{\gamma(\hat{\bf u}_r)}{2 \beta(k)^k}[t_1^{k} + \dots + t_j^{k}] ,
	\end{array}
	\eeq
	where we have used that $1\geq (t_j/\mathfrak{s}_j)^{\ell_j} \geq (t_j/\beta(k))^k$ for any integer $j\ge1$. If $k=1$, then $t_1+\dots+t_j=s_j\to+\infty$ when $j\to+\infty$, by the very definition of partition of $\R_{\geq0}$. Otherwise, if $k\geq2$, \eqref{vconsistent_part} implies that
$t_1^k+\dots+t_j^k \geq \mu(R,r)^k \Delta(k)^k \, j$, which tends  to $+\infty$ as $j\to+\infty$.
Hence, in both cases we reach a contradiction, so that $\iota_r<+\infty$.

Let us now  prove \eqref{S}. Using the Jensen inequality in \eqref{passaggi}, we deduce that
	\bel{jensen}
	\hat{\bf U}_R-\hat{\bf u}_r \geq \frac{\gamma(\hat{\bf u}_r)}{2 \beta(k)^k} \frac{s_j^k}{j^{k-1}} 
	\eeq
for any $j\leq \iota_r-1$. Now, taking $j=\iota_r-1$ and $k=1$ in \eqref{jensen}, we get \eqref{S} in the particular case $k=1$. Indeed, one has 
\[
s_{\iota_r} \leq s_{\iota_r -1} + \delta_1(R,r) \leq  \frac{2(\hat{\bf U}_R - \hat{\bf u}_r)}{\gamma(\hat{\bf u}_r)} +1.
\]
\noindent
Let now $k\geq2$. Again by \eqref{passaggi}, 	as soon as  $j\leq \iota_r-1$ we obtain
	\[
	\hat{\bf U}_R-\hat{\bf u}_r \geq \frac{\gamma(\hat{\bf u}_r)}{2 \beta(k)^k} \mu(R, r)^k\Delta(k)^k\, j.
	\]
 In particular, taking $j=\iota_r-1$ in the previous relation, we deduce that 
	\bel{passaggi2}
	\iota_r\leq J(R,r),
	\eeq
	 with $J(R,r)$  as in \eqref{J}. By \eqref{passaggi2}, \eqref{jensen} we finally obtain \eqref{S} also for $k\geq2$. 
\end{proof}
We are now ready to show  that the degree-$k$  $U$-feedback generator $\V$ degree-$k$ $U$-sample stabilizes  system \eqref{control_sys} to $\T$. Fix an arbitrary $\del(R,r)$-scaled $\V$-sampling process  $(x, \pi, \alpha_x^\pi, y_x^\pi)$   such that  $\d(x)\leq R$, as above. 
\vsmm
\noindent By \eqref{Gamma_pre} and  \eqref{Nhatn}, setting 
\[
\Gamma(R):= 2\tilde R =d_{U_+}({d_{U_-}^{\,\,\,-1}}(R)),
\] 
we directly get the overshoot boundedness property (i)  in Def. \ref{samplestab_costo}, i.e. 
${\bf d}(y_x^{\pi}(s)) \leq \Gamma(R)$   for any $s\geq0.$ 
Furthermore,  the properties of the functions $d_{U_+}$ and $d_{U_-}$ (see \eqref{Lzeta})  imply that $\ds \lim_{R\to0} \Gamma(R)=0$. 
\vsmm
\noindent In order to prove  the $U$-uniform attractiveness  property (ii)-(iii),  observe  that, 
 by the very definition of $\iota_r<+\infty$ in Lemma \ref{lemma_hatn}, one has $U(y_x^\pi(s_{\iota_r}))\leq \hat{\bf u}_r$.  Accordingly, the  time 
\bel{bf_t}
\mathbf{t}= \Ss(y_x^\pi,r):=\inf\{s\geq0 \text{ : } U(y_x^\pi(s))\leq \hat{\bf u}_r \},
\eeq
 (is finite and) satisfies 
\bel{unif_attr}
\mathbf{t} \leq s_{\iota_r} \leq {\bf T}(R,r),
\eeq
for ${\bf T}(R,r)$ as in \eqref{S}.   Set 
\bel{varphi}
{\varphi}(r):=\hat{\bf u}_r \qquad \text{for any $r>0$.}
\eeq
To conclude this step, it only remains to show condition (iii) in Def. \ref{samplestab_costo},  which is equivalent to prove that
\bel{entrap}
\d(y_x^\pi(s)) \leq r \qquad \text{for any $s\geq \mathbf{t}$.}
\eeq
  Let $\bar \j\geq1$ be the integer such that $\mathbf{t}\in [s_{\bar\j-1}, s_{\bar\j}[$, so that,
by Lemma \ref{incastro_chi}, (i), one has that $U(y_x^{\pi}(s))\le \hat{\bf u}_r + \lambda(\hat{\bf u}_r)$ and  $\d(y_x^\pi(s))\leq r$ for any $s\in[\mathbf{t}, s_{\bar \j}]$. Hence, either case (a) or case (b) below occurs: 
\begin{itemize}
	
	
	\item[(a)] $U\left(y_x^{\pi}(s_{\bar \j})\right) \leq \hat{\bf u}_r$. 
	
	\item[(b)] $U(y_x^{\pi}(s_{\bar \j})) \in ]\hat{\bf u}_r, \hat{\bf u}_r + \lambda(\hat{\bf u}_r)]$.
\end{itemize}



\noindent {\em Case} (a). By Lemma \ref{incastro_chi}, (i), it follows that $\d(y_x^{\pi}(s))\leq r$ for all $s \in [s_{\bar\j}, s_{\bar\j+1}]$. Then either case (a) or case (b) above holds with $\bar\j +1$ replacing $\bar\j$.
 \vsmm
\noindent {\em Case} (b). 
Arguing as in Lemma \ref{lemma_hatn}, we deduce that there exists an integer $\iota(\bar\j)$ satisfying
$$
\iota(\bar\j):= \inf \{ j \in \N \text{ : } j\geq \bar\j+1 \ \ \text{ and } \ \ U\left( y_x^{\pi}(s_j)\right) < \hat{\bf u}_r  \}<+\infty.
$$
In particular, by Lemma \ref{lemma_potdecrescentecosto}, the sequence $\left(U(y_x^{\pi}(s_j))\right)_j$ is decreasing for $\bar\j \leq j \leq \iota(\bar\j)$ and by Lemma \ref{incastro_chi}, (ii), we get $\d(y_x^{\pi}(s)) \leq r$ for all $s\in [s_{\bar\j}, s_{\iota(\bar\j)}]$. Moreover, in view of Lemma \ref{incastro_chi}, (i), we also have $\d(y_x^{\pi}(s)) \leq r$ for all $s\in [s_{\iota(\bar\j)}, s_{\iota(\bar\j)+1}]$.
Then, either case (a) or case (b) above holds with $\iota(\bar\j)+1$ replacing $\bar\j$. 

The proof of
 \eqref{entrap} is thus  concluded. In particular, let us point out that the continuity and monotonicity properties of the functions $\Gamma$, $\varphi$ and ${\bf T}$ are straightforward consequence of their very definitions and of the properties of the functions $d_{U_-}$ and $d_{U_+}$.

\noindent
\subsection{ The cost boundedness property }  Let $0<r<R$ and let $(x, \pi, \alpha_x^\pi, y_x^\pi, \mathfrak{I}_x^\pi)$ be an arbitrary $\del(R,r)$-scaled $\V$-sampling process-cost  such that  $\d(x)\leq R$,  associated with a $\del(R,r)$-scaled $\V$-sampling process $(x, \pi, \alpha_x^\pi, y_x^\pi)$ as in the previous step. We use the same notations as above and, in addition, set
$$
 u_j:= U(x_{j })\qquad \text{for any $j\in\N$, \ $1\le j\le\J+1$.}
 $$
 Observe that from the previous lemmas it  follows that
\bel{u_j}
\hat{\bf u}_r
\le u_{\iota_r}<u_{\iota_r-1}<\dots<u_1=U(x), \quad \frac{\hat{\bf u}_r}{2}\le u_{\iota_r+1}<u_{\iota_r}.
\eeq
 To prove  the uniform cost boundedness  property (iv)  in Def. \ref{samplestab_costo}, we  need to construct a integral-cost-bound function ${\bf \Psi}=\Lambda\,\Psi$,  such that 
 $$
   \mathfrak{I}_x^\pi(\mathbf{t}) = \int_0^{\mathbf{t}}   l(y_x^{\pi}(s),\alpha_x^{\pi}(s))\, ds   \leq \Lambda(R)\, \Psi(U(x),\varphi(r)),  
   $$
where  
$\mathbf{t}$ is as in  \eqref{bf_t}.  When  $U(x)\le \varphi(r)$, the integral is zero  (because $\mathbf{t} =0$)  and the estimate is trivial for every nonnegative ${\bf \Psi}$.  Hence, let us suppose $U(x)> \varphi(r)=\hat{\bf u}_r$. 
Since $l\ge0$, from \eqref{unif_attr}, \eqref{u_j} and   Lemma \ref{lemma_potdecrescentecosto},  we get
\bel{est_l}
\begin{split}
\int_0^{\mathbf{t}}&  l( y_x^\pi(s), \alpha_x^\pi(s)) ds \leq \int_0^{s_{\iota_r} \wedge S_{y_x^\pi}}  l( y_x^\pi(s), \alpha_x^\pi(s)) ds   \\
 &= \sum_{j=1}^{\iota_r-1} \int_{s_{j-1}}^{s_{j}} l( y_x^\pi(s), \alpha_x^\pi(s)) ds  + \int_{s_{\iota_r-1}}^{s_{\iota_r} \wedge \sigma_\J}  l( y_x^\pi(s), \alpha_x^\pi(s)) ds   \\
& 	\leq \sum_{j=1}^{\iota_r } \frac{\mathfrak{s}_j^{\ell_j}}{t_j^{\ell_j-1}} \frac{u_j-u_{j+1}}{p_0(u_j)} ,
\end{split}
\eeq
 where  $S_{y_x^\pi}$ is as in Def. \ref{Admgen}, so that $S_{y_x^\pi}=\sigma_\J$, and,  in particular,  
\[ 
\begin{split}
\int_{s_{\iota_r-1}}^{s_{\iota_r} \wedge S_{y_x^\pi}}  l( y_x^\pi(s), \alpha_x^\pi(s)) ds &\leq \int_{s_{\iota_r-1}}^{s_{\iota_r}}   l( y_{x_{\iota_r}, t_{\iota_r}}(s), \alpha_{x_{\iota_r}, t_{\iota_r}}(s)) ds \\
&\leq  \frac{(\mathfrak{s}_{\iota_r})^{\ell_{\iota_r}}(u_{\iota_r}-u_{\iota_r+1})}{(t_{\iota_r})^{\ell_{\iota_r}-1}{p_0(u_{\iota_r})}  }.
\end{split}
\]
Define the   function $\hat \Lambda:\{(v_1,v_2)\in \R_{>0}^2: \  v_2\le v_1\}  \to\R_{>0}$,  given by
$$
\hat \Lambda(v_1,v_2):=\frac{v_1}{v_2} \qquad (\ge 1).
$$
For every $j=1,\dots,\iota_r+1$, recalling that $\frac{\varphi(r)}{2}\le u_j\le U(x)$ , we have
\bel{est_u_j}
\frac{\varphi(r)}{2\hat \Lambda(U(x),\varphi(r))}\le\frac{u_j}{\hat \Lambda(U(x),\varphi(r))}\le  \frac{U(x)}{\hat \Lambda(U(x),\varphi(r))}=\varphi(r). 
\eeq
Now, let us  fix $j\in\{1,\dots,\iota_r+1\}$ and let us    estimate the quantity $\frac{1}{t_j^{\ell_j-1}}$.
   The definition  \eqref{eq_delta}   implies that either
\[
\left(L_{_U}\w + \frac{ L_l M}{2} \right) \check\delta_\ell(r) \geq \frac{\gamma(\varphi(r))}{4} 
\]
or 
\[ 
\frac{\bar C(2L_{_U})^\nu (M+\w)^2}{[(2L_{_U})\land\varphi(r)]^\nu} (\check\delta_\ell(r))^\ell\geq \frac{\gamma(\varphi(r))}{4}.
\] 
Thus,  \eqref{vconsistent_part},  \eqref{est_u_j} yield that  
\bel{tj_alla_hj}
\begin{split}
\frac{1}{t_j^{\ell_j-1}} &\leq \frac{1}{\Delta(k)^{\ell_j-1}} \Bigg( \frac{1}{\delta_0^{\ell_j-1}} \,\, \vee \,\, \frac{1}{(\check\delta_{\ell_j})^{\ell_j-1}} \,\, \vee \,\, \frac{1}{(\hat\delta_{\ell_j})^{\ell_j-1}} \Bigg) \\
&\leq \frac{1}{\Delta(k)^{k-1}} \Bigg[ \frac{1}{\delta_0^{k-1}} \,\, \vee \,\, \frac{4\bar C (2L_{_U})^\nu (M+\w)^2}{\gamma^{1-\frac{1}{k}}\big(\frac{u_j}{\hat \Lambda}\big) \big[(2L_U)\land\frac{u_j}{\hat \Lambda}\big]^{\nu-\frac{\nu}{k}}}  \,\, \vee \,\, \frac{4L_{_U}\w + 2 L_l M}{\gamma^{k-1}\big(\frac{u_j}{\hat \Lambda}\big)}  \\
& \qquad\quad\quad\vee \,\,  \left. \frac{(2L_{_U}(M+\w))^{1-\frac{1}{k}} \, \vee \, (L_{_U}M)^{k-1}}{\Big(\frac{u_j}{\hat \Lambda}\Big)^{1-\frac{1}{k}}} \right] \le \tilde \Lambda(\tilde R) \frac{p_0\big(\frac{u_j}{\hat \Lambda}\big)}{\beta(k)^k} \Theta\Big(\frac{u_j}{\hat \Lambda}\Big),
\end{split}
\eeq
where ($\Theta$ is as in \eqref{Theta} and)  we have  written $\hat \Lambda$ in place of $\hat \Lambda(U(x),\varphi(r))$   and we have  set  
\bel{K}
\begin{array}{l}
\tilde \Lambda(\tilde R) := \frac{\beta(k)^k}{\Delta(k)^{k-1}}\Big(\frac{1}{\delta_0^{k-1}}\,\,\vee\,\,  4\bar C [(2L_{_U})^{\frac{\nu}{k}}\vee(2L_{_U})^\nu] (M+\w)^2  \\
\qquad\qquad\qquad\qquad\vee\,\, (4L_{_U}\w + 2 L_l M)  \,\,\vee\,\,  \left( 2L_{_U}(M+\w)\right)^{1-\frac{1}{k}}    \,\,\vee\,\, (L_{_U}M)^{k-1} \Big).
\end{array}
\eeq
Note that   $\tilde \Lambda(\tilde R)$  depends on $\tilde R$ defined as in \eqref{Nhatn}, likewise all constants $\delta_0$, $\bar C$, $L_U$, $M$, $\w$, and  $L_l$ actually depend  on $\tilde R$.   
  Hence, from \eqref{est_l}, \eqref{est_u_j}, and the monotonicity of $p_0$,   we get
\bel{est_l3}
\begin{array}{l}
\ds\int_0^{\mathbf{t}}  l( y_x^\pi(s), \alpha_x^\pi(s)) ds \leq   \sum_{j=1}^{\iota_r } \frac{\mathfrak{s}_j^{\ell_j}}{t_j^{\ell_j-1}} \frac{u_j-u_{j+1}}{p_0(u_j)}\le \tilde \Lambda(\tilde R)  \sum_{j=1}^{\iota_r }\Theta\Big(\frac{u_j}{\hat \Lambda}\Big)\,(u_j-u_{j+1}) \\
\quad\qquad\qquad\qquad\qquad\le \ds  \tilde \Lambda(\tilde R)  \int_{\frac{ \varphi(r)}{2 }}^{U(x)}\Theta\Big(\frac{w}{\hat \Lambda}\Big)\,dw=  \Lambda(R)\Psi(U(x),\varphi(r)),
\end{array}
\eeq
as soon as we set $\Lambda(R):=\tilde \Lambda(\tilde R)$ ($\tilde R$ is in turn a function of $R$ by \eqref{Nhatn}) and define    $\Psi: \R_{>0}^2  \to \R_{\geq0}$, as  
\bel{Psi}
\ds\Psi (v_1,v_2) :=  0  \,\, \vee\,\, \int_{\frac{v_2}{2}}^{v_1}\Theta\Big(\frac{v_2}{v_1}\cdot{w}\Big)\,dw, \qquad \text{ for any $(v_1,v_2)\in\R_{>0}^2$}.
\eeq
To complete the proof of statement (ii) in case $k>1$, let us show that $\Psi$ is an  integral-cost-bound function.  Given an arbitrary $\bar v>0$, let us  consider the bilateral sequence $(v_i)_{i\in\Z}$,  given by 
\[
v_1=\bar v, \qquad v_{i+1}= \frac{v_{i}}{2} \quad \forall i\in\Z, 
\]
so that $\hat \Lambda(v_i,v_{i+1})=2$ for all $i$. Thanks to   hypothesis {\bf (H3)}, we have
\bel{CONTI}
\begin{array}{l}
\ds	\sum_{i=1}^{+\infty} \Psi(v_i,v_{i+1}) =   \sum_{i=1}^{+\infty}  \int_{v_{i+2}}^{v_i}\Theta\Big(\frac{w}{2}\Big)\,dw  
 =  \sum_{i=1}^{+\infty}  \int_{\frac{\bar v}{2^{i+1}}}^{\frac{\bar v}{2^{i-1}}}\Theta\Big(\frac{w}{2}\Big)\,dw \\
\ds\ \qquad\qquad  \le 2\int_0^{\bar v} \Theta\Big(\frac{w}{2}\Big)\,dw
=4\int_0^{\frac{\bar v}{2}}\Theta(w)\,dw<+\infty.
\end{array}
\eeq
Clearly, $\Psi$ is  continuous and  the required monotonicity and unboundedness  properties are immediate consequence of the definition of $\Theta$.  Similarly, $\Lambda$ is increasing, hence it can be approximated from above by a continuous increasing function, still  denoted by $\Lambda$ with a small abuse of notation.
Finally, when  hypothesis {\bf (H2)$^*$}  
is satisfied and $U$ is also Lipschitz continuous and semiconcave on $\R^n\setminus\T$,  it is immediate to see that $\bar C$, $L_{_U}$, $M$, $\w$ and $L_{_l}$ in \eqref{K} (as well as $\delta_0$ in \eqref{tilde_delta})
 no longer depend on the compact set $B(\T,2\tilde R)$, so that $\Lambda$   turns out to be    constant. 

\subsection{Sketch of the proof of (i) for  $k\ge1$ and of (ii) for $k=1$}

 Let now $U$ be a degree-$k$  MRF  for some   $p_0$ and  $\gamma$, and assume $U$ merely locally semiconcave on $\R^n\setminus\T$.\footnote{Incidentally, with regard to the stabilizability issue, $p_0$ plays no role.}  
\vsm 
 
\noindent   Consider a selection $p(x)\in\partial U(x)$ and let $\V$ be a degree-$k$ $U$-feedback generator. Fix $0<r<R$ and, using the above notations,    define $\hat{\bf U}_R$ and $\tilde R$ as in \eqref{Nhatn}, so that for any $x\in U^{-1}(]0,\hat{\bf{U}}_R])$ and   $t\in[0,\bar\delta]$, each $\V$-multiflow $y_{x,t}$ is defined on $[0,t]$ and satisfies \eqref{asymptotic_formula-dim}.  In particular, all constants   $\bar\delta$, $\w$,   $M$,  and $L_l$ as in  \eqref{asymptotic_formula-dim}, \eqref{cost_dim} are fixed, as above,   on the compact set $\B(\T,2\tilde R)$. The fundamental difference from the previous proof is that the Lipschitz continuity and semiconcavity constants of $U$ can no longer be defined up to the target, but on compact sets $\M\subset\R^n\setminus\T$ only.
\vsmm
\noindent For $d_{U_-}$ and $d_{U_+}$ as in \eqref{zeta},  set 
\bel{hatn2}
\varphi(r)=\check{\bf \u}_r:= \frac{1}{2}\, d_{U_+}^{-1}(r), \qquad
\check{\bf r}_r:=\frac{1}{2}\, d_{U_-}\Big(\frac{\check{\bf u}_r}{2}\Big).
\eeq
Accordingly, it holds 
\[
\begin{split}
\T &\subset  \B(\T, \check{\bf r}_r) \subset \B(\T, 2\check{\bf r}_r) \subseteq U^{-1}\Big(\,\Big]0,\frac{\check{\bf u}_r}{2}\Big]\Big)\subset U^{-1}(]0,\check{\bf u}_r])\subset U^{-1}(]0,2\check{\bf u}_r]) \\
&\subseteq \B(\T,r)\subset \B(\T,R)\subseteq U^{-1}(]0,\hat{\bf U}_R])\subseteq \B(\T,  \tilde R )\subset \B(\T, 2\tilde R).
\end{split}
\]
Let us define  $$\M:=\overline{\B(\T,2\tilde R) \setminus \B(\T,  \check{\bf r}_r)},$$
and let $\eta_{_\M}$, $L_{_\M}>0$ be the semiconcavity and the Lipschitz constant of $U$ on $\M$, respectively.
Then, any $\V$-multiflow $y_{x,t}$ starting from $x\in U^{-1}([\check{\bf \u}_r, \hat{\bf U}_R])$ up to $t\leq \bar\delta\land \frac{\check{\bf r}_r}{M} \land \frac{\check{\bf \u}_r}{2L_{_\M} M}$ is such that 
\bel{segmento}
U(y_{x,t}(t))\geq \frac{\check{\bf u}_r}{2}, \qquad y_{x,t}(s)\in \M \quad \forall s\in[0,t],  \qquad {\rm sgm}(x, y_{x,t}(t)) \subset \M. 
\eeq
Indeed $|U(y_{x,t}(s))-U(x)|\leq L_{_\M} |y_{x,t}(s)-x|\leq  L_{_\M} M s \leq \frac{\check{\bf \u}_r}{2}$ for any $s\in[0,t]$, so that  $U(y_{x,t}(s))\geq  \frac{\check{\bf u}_r}{2}$ for any $s\in[0,t]$. In view of \eqref{Lzeta}, the first two relations in \eqref{segmento} follow. The last property in \eqref{segmento} can be deduced by the facts that $\d(x) \geq 2\check{\bf r}_r$  and $|y_{x,t}(t)-x|\leq M t\leq  \check{\bf r}_r$. Now set
\bel{delta_h2}
\delta(R,r) := 1 \,\,\, \wedge \,\,\, \bar\delta\,\,\, \wedge \,\,\, \frac{\check{\bf \u}_r}{2L_{_\M} M} \,\,\, \wedge \,\,\, \frac{\check{\bf r}_r}{M} \,\,\, \wedge \,\,\, \frac{\gamma(\check{\bf \u}_r)}{2(L_{_\M}\w + L_{_l} M + \eta_{_\M} (M+\w)^2 )},
\eeq
Note that now $\delta(R,r)$ does not depend on $\ell$. 
 Suitably modifying the constants in the proof of Lemma \ref{lemma_potdecrescentecosto}, it is  possible to prove that any $\V$-multiflow $y_{x,t}$ starting from $x\in U^{-1}([\check{\bf u}_r, \hat{\bf U}_R])$ up to time $t\leq \delta(R,r)$ satisfies inequality \eqref{decrescenza_costo}. Indeed, thanks to \eqref{segmento} we can apply \eqref{semiconcave_est} in place of \eqref{scv1} and derive that
	\[
	\begin{split}
		&U(y_{x,t}(t)) - U(x)  + p_0(U(x)) \int_0^t l(y_{x,t}(s), \alpha_{x,t}(s))\,ds \, \left(\frac{t^{\ell-1}}{\mathfrak{s}^\ell}\right) \\
		&\ \leq  \Big( \frac{t}{\mathfrak{s}} \Big)^\ell \Big[ -\gamma(U(x)) + t \Big( L_{_\M} \w + L_{_l} M + \eta_{_\M}(M+\w)^2\Big)    \Big]   \leq -\frac{\gamma(U(x))}{2} \left(\frac{t}{\mathfrak{s}}\right)^\ell.
	\end{split}
	\]
From now on, the proof of (i) is a simple adaptation of the previous one, so we omit it. 

\vsm 

\noindent
For what concerns the cost estimate in the case $k=1$, arguing as above
 we simply get
\bel{est_l2}
\ds\int_0^{\mathbf{t}}  l( y_x^\pi(s), \alpha_x^\pi(s)) ds
 	\leq \sum_{j=1}^{\iota_r } \frac{u_j-u_{j+1}}{p_0(u_j)} \leq 
	 \int_{\frac{\varphi(r)}{2}}^{U(x)}\Theta(w)\,dw
\eeq
where now $\Theta(w)=\frac{1}{p_0(w)}$ for all $w\ge0$. 
 \qed

 \section{Generalizing form and regularity of the dynamics for the case $k>1$}\label{Sec_gen}
 In Remark \ref{Rconfronto1} we observed that, for the case $k=1$,  the extension  of Thm. \ref{Th_strong} to a general  system 
\bel{dyn_gen}
 \dot y=F(y,a), \qquad a\in A,
\eeq
  is straightforward. The case $k>1$  is much more involved.  However, a first easy  extension can be achieved by still considering  a  driftless  control-affine system  but with a control set  of the following form$$\tilde A:=\Big\{\beta_1e_1,\dots, \beta_m e_m, -\gamma_1e_1,\dots,-\gamma_m e_m \ \ \ \ \text{s.t.} \ \ \ \ \beta_i,\gamma_i>0,\,\,\,\forall  i=1,\ldots,m  \Big\}.$$  For instance, the definitions of oriented controls for the case $k=1,2$ should be replaced by the following ones:

 \begin{itemize} 
 	\item[(i)] if $\ell_B=1$, i.e. $B=X_j$   for some integer $j\geq 1$,  we set
 	$${\alpha}_{(X_j,{\bf g},+),t}(s):= \beta_i e^i \qquad  \text{for any $s\in [0,t]$,}
 	$$
 	and 
 	$${\alpha}_{(X_j,{\bf g},-),t}(s):= -\gamma_i e^i \qquad  \text{for any $s\in [0,t]$,}
 	$$
 	where   $i\in\{1,\dots,m\}$  is such that $B(\mathbf{g}) = f_i$   (i.e. $g_j=f_i$);
 	\vsm
 	\item[(ii)] if $\ell_B=2$, $B=[X_j,X_{j+1}]$ and $B(\mathbf{g}) = [f_{i_1},f_{i_2}]$  (namely, $g_{j}=f_{i_1}$ and $g_{j+1}=f_{i_2}$), we set
	\[
	\begin{split}
&{\alpha}_{(B,{\bf g},+),t}(s):= \\
 &\quad \begin{cases}
 		{\alpha}_{(X_j,{\bf g},+),\frac{\tau_1}{\tau}t}(s)  \Big(= \beta_{i_1} e^{i_1}  \Big)   &s\in \left[0,  \frac{\tau_1}{\tau}t \right[ \vspace{0.1cm}\\ 
 		{\alpha}_{(X_{j+1},{\bf g},+),\frac{\tau_2}{\tau}t} \left(s - \frac{\tau_1}{\tau}t\right)  \Big(=\beta_{i_2} e^{i_2}\Big)    &s\in \left[\frac{\tau_1}{\tau}t ,  \frac{\tau_1+\tau_2}{\tau}t \right[ \vspace{0.1cm}\\
 		{\alpha}_{(X_j,{\bf g},-),\frac{\tau_3}{\tau}t}   \left(s -  \frac{\tau_1+\tau_2}{\tau}t\right) \Big(=  -\gamma_{i_1} e^{i_1}\Big)  &s\in \left[ \frac{\tau_1+\tau_2}{\tau}t ,  \frac{\tau_1+\tau_2+\tau_3}{\tau}t \right[\vspace{0.1cm} \\
 		{\alpha}_{ (X_{j+1},{\bf g},-),\frac{\tau_4}{\tau}t } \left(s -  \frac{\tau_1+\tau_2+\tau_3}{\tau}t \right) \Big( =-\gamma_{i_2} e^{i_2}\Big)  &s\in \left[\frac{\tau_1+\tau_2+\tau_3}{\tau}t, t \right]
 	\end{cases}
 \end{split}
 \]
 	and	$$ {\alpha}_{(B,{\bf g},-),t}(s):=  {\alpha}_{(B,{\bf g},+),t}(t-s) \qquad\text{for any $s\in [0,t]$},$$
	where  	$$ \tau_1=1/\beta_{i_1}, \ \  \tau_2=1/\beta_{i_2},  \ \ \tau_3=1/\gamma_{i_1},\ \  \tau_4 =1/\gamma_{i_2}, \ \  \tau=\tau_1+\tau_2+\tau_3+\tau_4.$$ 	
 \end{itemize} 
 It is easy to verify that 	the asymptotic formula \eqref{asymptotic_formula-} should be replaced by the $(i_1,i_2)$-dependent inequality
 	\bel{asymptotic_formula---}
 	\left| y(t) - x -\sgn \, B( \mathbf{g})(x) \left( \frac{t}{\tau} \right)^2  \right| \leq \w\, t \left(\frac{t}{\tau}\right)^2.
 	\eeq 
 For $k>2$ one would proceed in an akin way, so that,   by means of  suitable notions of the degree-$k$  Hamiltonians,  an extension of Thm. \ref{Th_strong} would follow without any other difficulty.
 An extension of the case $k>1$ for   control system \eqref{dyn_gen}
 to a {\it general dynamics} $F$ and a {\it general control set $A$,} requires  strong selection properties for the set-valued map $x\rightsquigarrow F(x,A)$. As a natural example, one could assume the existence of a selection 
 \bel{selection}
 \check F (x, C) \subseteq  F(x,A)
 \eeq
 where $\check F$ is a driftless control-affine dynamics  and $C$ is like $\tilde A$, namely, for some integer $r>1$,
 $$
 \check F(y,c):= 	
 \sum_{i=1}^r  \check f_i(y)\, c^i, \qquad (c^1,\dots,c^r) \in C,
 $$
 ($\check f_1,\ldots,\check f_r$ are vector fields) and
 $$C:=\Big\{\beta_1e_1,\dots,\beta_re_r, -\gamma_1e_1,\dots,-\gamma_r e_r \ \ \text{s.t.} \ \ \beta_i,\gamma_i>0,\,\,\,\forall  i=1,\ldots,r  \Big\}\subset \R^r.$$
 Thanks to   selection  \eqref{selection}, 
 a  degree-$k$ feedback generator for the control-affine system $\dot y  = \sum_{i=1}^r  \check f_i(y )\, c^i$, \, $c\in C$,  defines a feedback law also for the original system $\dot y=F(y,a)$, $a\in A$. Hence, by defining  Hamiltonians $\check H[p_0]^{(h)}$, $h\leq k$, corresponding to the dynamics $\check F$ in a way similar to 
 \eqref{kHam}, we obtain a result like Thm. \ref{Th_strong},  valid for the system $\dot y= \check F(y,c)$, \, $c\in C$,  which in turn provides degree-$k$ sample stabilizability with regulated cost for the original  system  \eqref{dyn_gen}.
 
\vsm
  A non-trivial further generalization might concern  control-affine systems with a non-zero drift. Obviously, the corresponding degree-$k$  Hamiltonian should  be based also on minimizations over sets including suitable brackets containing the drift among their factors (see  \cite[Thm. 5.1]{MR2}).  
\vsm 

  Finally, another  interesting generalization might consist, still for a driftless control-affine system $\dot y  = \sum_{i=1}^m   f_i(y )\, a^i$,   in considering  less regular vector fields $f_1,\ldots,f_m$: if $B$ is a formal  iterated bracket, in view of \cite[Thm. 3.7]{FR2} (see also \cite{FR1}), an asymptotic formula similar to  \eqref{asymptotic_formula-}  still holds provided $ B(f_1,\ldots,f_m)$ is a  $L^\infty$ map.  In such a case one can make use of  a set-valued iterated Lie bracket $ B_{set}(f_1,\ldots,f_m)$, which happens to be upper semi-continuous.  For instance, in the case of the bracket $[f_5,f_7]$ with  $f_5,f_7$ locally Lipschitz continuous, the bracket $[f_5,f_7]$ is an $L^\infty$ map defined almost everywhere. The corresponding set-valued bracket is  defined, for every $x$, as 
  $$
  [f_5,f_7]_{set}(x):= co\{\lim_{n\to\infty}[f_5,f_7](x_n), \,\,\,x_n\to x\},
  $$
   where `co' denotes the convex hull and  the limits are intended for all sequences $(x_n)$  converging to $x$ made of  of differentiability points  for both $f_5$ and $f_7$. This bracket has revealed  fit for extending  various basic results on vector fields' families (see \cite{RS1,RS2}) and might be useful also for an extension of the results of the present paper (see  \cite[Thm. 4.1]{MR2},  and  also \cite{BFS} for the STLC's issue).

\end{document}